\documentclass[11 pt,a4paper]{article}
\usepackage{amsmath,amscd}
\usepackage{amstext}
\usepackage{amssymb,epsfig}
\usepackage{amsthm}
\usepackage{amsfonts}
\usepackage{graphicx}
\usepackage[all]{xy}
\usepackage{tikz}

\renewcommand{\footnote}{\endnote}
\newtheorem{theorem}{Theorem}[section]

\newtheorem{lemma}[theorem]{Lemma}
\newtheorem{proposition}[theorem]{Proposition}
\newtheorem{corollary}[theorem]{Corollary}

\theoremstyle{definition}
\newtheorem{definition}[theorem]{Definition}
\newtheorem{remark}[theorem]{Remark}

\newtheorem{example}[theorem]{Example}

\usepackage[utf8]{inputenc}      % pour les accents
\begin{document}
\date{}
\title{Classification of rotations on the torus $\mathbb{T}^2$}
\author{Nicolas Bedaride
\footnote{Fédération de recherches des unités de mathématiques de Marseille,
Laboratoire d'Analyse topologie, et probabilités  UMR 7353, Case cours A, Avenue Escadrille Normandie Niemen 13397 Marseille cedex 20,
France. nicolas.bedaride@univ-amu.fr}}

\maketitle
\bibliographystyle{alpha}
\begin{abstract}
We consider rotations on the torus $\mathbb{T}^2$, and we classify
them with respect to the  complexity functions. In dimension one, a minimal rotation  can be coded by a sturmian word. A sturmian word has complexity $n+1$ by the Morse-Hedlund theorem. Here we
make a generalization in dimension two.
\end{abstract}
\section{Introduction}
Sturmian words are infinite words over a two-letter alphabet that have exactly $n+1$ factors of length $n$ for each integer $n$. The number of factors, of a given length, of an infinite word is called the complexity function. These words have been introduced by Morse-Hedlund, \cite{He.Mo}. 
They admit several equivalent definitions.
 One possibility is to consider the rotation of angle $\alpha$ on the torus $\mathbb{T}^1$.  Consider a two-letter alphabet corresponding to the intervals $(0;1-\alpha)$ and $(1-\alpha;1)$. Then, the orbit of any point under the rotation is coded by an infinite word. This word is a sturmian word if and only if $\alpha$ is an irrational number. 
 If $\alpha$ is rational, then the rotation is periodic and the word is periodic.
 
 A sturmian word can also be defined by the billiard map:
A billiard ball, i.e.\ a point mass, moves inside a polyhedron $P$
with unit speed along a straight line until it reaches the
boundary $\partial{P}$, then it instantaneously changes direction
according to the mirror law, and continues along the new line.
Label the faces of $P$ by symbols from a finite alphabet
$\mathcal{A}$ whose cardinality equals the number of faces of $P$.
In the case of the square, we can code the parallel faces by the same letter, and the orbit of a point is coded by an infinite word on two letters. Since the work of Coven-Hedlund, \cite{Co.He},  we know that this word is a sturmian word (under suitable hypothesis on the direction).  
Thus we have three equivalent definitions for a sturmian word:  
An infinite word with $n+1$ factors of length $n$; an infinite word which codes the orbit of a point under an irrational rotation on the torus $\mathbb{T}^1$; an infinite word which codes the orbit of a point under the billiard orbit inside the square.  

 Several attempts have been made to extend Sturmian words to words over alphabets  of more than two letters. An approach has been initiated by Rauzy \cite{Ra.83}, and 
 developped by Arnoux, Rauzy \cite{Ar.Rau}. Here we present an other approach. We consider a rotation on the torus of dimension two, with a natural partition of the torus, see \cite{Ra.82}. The orbit of a point is coded by an infinite word. As in the one-dimensional case, this map can be seen as a billiard orbit inside the cube coded with three letters. Thus the infinite word can be viewed as the coding of the billiard orbit of a point inside the cube. 
The computation of the complexity has been made when the direction satisfies some algebraic
conditions. Under these assumptions the complexity equals
$n^2+n+1$. The first proof was given in \cite{Ar.Ma.Sh.Ta,
Ar.Ma.Sh.Ta1}, and a general proof in dimension $s\geq 3$ appears
in \cite{Ba}. Moreover, we give a new proof of the three-dimensional
result in \cite{moi1}, and we remark that the proof of
\cite{Ar.Ma.Sh.Ta, Ar.Ma.Sh.Ta1} is false: there exists a minimal
direction with a complexity less than $n^2+n+1$. 

In this paper we give a complete characterization of the complexity of two-dimensional rotations, and
obtain the complexity of a billiard word in the non-totally irrational cases. Moreover our study allows us to describe the geometry of a non minimal rotation orbit. In most of the cases it reduces to a one-dimensional translation. The proof could be generalized to higher dimension, the scheme of the proof is the same using $d$-dimensional translations instead of one dimensional translations.

\subsection{Outline of the paper}

In this paper we consider a rotation on the torus $\mathbb{T}^2$.  This rotation is related to a billiard orbit inside the cube. We consider a point, and its orbit under the rotation. It gives an infinite word, its complexity is function of the angle of the rotation. We classify these complexities under the hypothesis fulfilled by the angle of rotation. Since the angle of rotation is linked to the direction of the billiard orbit, we express the hypothesis in term of the direction.

In Section \ref{background} we recall some definitions of combinatorics, of the billiard map, and some results about the complexity of billiard words. 
In Section \ref{enon} we give the statement of the theorem.
In Section \ref{proof} we prove our result. We split the proof in different propositions which represent the different cases of the theorem. Most of time the proof consists in a reduction to the one dimensional-case.

\section{Background}\label{background}
\subsection{Combinatorics}
\begin{definition}
Let $\mathcal{A}$ be a finite set called the \emph{alphabet}. By a
language $L$ over $\mathcal{A}$ we always mean a factorial
extendable language: a language is a collection of sets
$(L_n)_{n\geq 0}$ where the only element of $L_0$ is the empty
word, and each $L_n$ consists of words of the form $a_1a_2\dots
a_n$ where $a_i\in\mathcal{A}$ and such that for each $v\in L_n$
there exist $a,b\in\mathcal{A}$ with $av,vb\in L_{n+1}$, and for
all $v\in L_{n+1}$ if $v=au=u'b$ with $a,b\in\mathcal{A}$ then
$u,u'\in L_n$.
\end{definition}

\begin{remark}
This definition is closely related to the lamination language defined in \cite{Cou.Hil.Lus.07}.
\end{remark}
\begin{definition}
Let $\mathcal{L}$ be an extendable, factorial language.
The complexity function of the language $\mathcal{L}$ is
defined by 
$$p:\mathbb{N}\rightarrow\mathbb{N}$$ 
$$p(n)=card(L_n).$$
\end{definition}
\begin{definition}\label{compu}
An infinite word $v$ over the alphabet $\mathcal{A}$ is a sequence $(v_n)_{n\in\mathbb{N}}$ such that $v_n\in\mathcal{A}$ for every integer $n$.  A subword $w$ of $v$ of length $n$ is a finite word such that there exists $n_0\in\mathbb{N}$ and $w=v_{n_0}v_{n_0+1}\dots v_{n_0+n-1}$. The set of subwords of length $n$ is denoted by $\mathcal{L}_n$.
If $v$ is an infinite word defined over a finite alphabet, then the union $L=\displaystyle\bigcup\mathcal{L}_n$ forms a language. The complexity of $u$ is by definition the complexity of $L$.
\end{definition}

\subsection{Billiard map}
We recall some facts from billiard theory. Additional details can be found in \cite{Ta} or \cite{Ma.Tab}. 

\begin{definition}\label{bordcube}
Let $C$ be the cube $[0;1]^3$, we denote by $(e_i)_{0\leq i\leq3}$ the orthonormal basis of $\mathbb{R}^3$, and by $<e_i,e_j>$ the  square generated by the vectors $e_i,e_j$. 
$$<e_i,e_j>=\{\lambda e_i+\mu e_j, 0\leq \lambda\leq 1; 0\leq \mu\leq 1\}.$$

Then $\partial{C}$ is the union of the six following squares:
$$<e_1,e_2>+(0,0,0); <e_1,e_2>+(0,0,1);$$ 
$$<e_1,e_3>+(0,0,0); <e_1,e_3>+(0,1,0 );$$
$$ <e_2,e_3>+(0,0,0); <e_2,e_3>+(1,0,0).$$
It is the boundary of the cube.
\end{definition}

A billiard ball, i.e.\ a point mass, moves inside $C$ with unit speed along a straight
line until it reaches the boundary $\partial{C}$ (see Definition \ref{bordcube}), then
instantaneously changes direction according to the mirror law, and
continues along the new line. More precisely, the billiard map $T$ is defined
on a subset $X$ of $\partial{C}\times \mathbb{RP}^2$ by
the following method (where $\mathbb{RP}^2$ is the projective plane):

First we define the set $X'\subset\partial{C}\times\mathbb{RP}^2$.
A point $(m,\omega)$ belongs to $X'$ if and only if one of the two following conditions holds: 

\begin{enumerate}
\item The line $m+\mathbb{R}[\omega]$ intersects an edge of
$C$, where $[\omega]$ is a vector of $\mathbb{R}^3$ which
represents $\omega$.\\

 \item The line $m+\mathbb{R}[\omega]$ is included inside
 the face of $C$ which contains $m$.
\end{enumerate}
 Then we define $X$ as the set
 $$X=(\partial{C}\times
 \mathbb{RP}^2)\setminus X'.$$

 Now we define the map $T$:
Consider $(m,\omega)\in X$, then we have
$T(m,\omega)=(m',\omega')$ if and only if $mm'$ is colinear to
$[\omega]$, and $[\omega']=s[\omega]$, where $s$ is the linear
reflection over the face which contains $m'$.
$$T:X\rightarrow \partial{C}\times\mathbb{RP}^3$$
$$T:(m,\omega)\mapsto (m',\omega')$$
\begin{remark}
In the sequel we identify $\mathbb{RP}^2$ with the unit vectors
of $\mathbb{R}^3$ ({\it i.e} we identify $\omega$ and
$[\omega]$).
\end{remark}

\subsection{Notations for the billiard map}
Label the faces of $C$ by three symbols from a finite alphabet
$\mathcal{A}$ such that two parallel faces of the cube are coded
by the same symbols. To the orbit of a point in a direction
$\omega$, we associate a word in the alphabet $\mathcal{A}$
defined by the sequence of faces of the billiard trajectory.

\begin{definition}\label{defbill}
The set of points $(m,\omega)$ such that for all integers $n$,
$T^{n}(m,\omega)\in X$ is denoted by $X_\infty$. The infinite word
associated with a point $(m,\omega)$ in $X_\infty$ is denoted by
$v_{m,\omega}$.
\end{definition}

We finish by three definitions used in the proof of Lemma \ref{ordre}.
\begin{definition}\label{numeroedge}
 An edge parallel to the axis $Ox$, resp., $Oy$, $Oz$
is called of type 1, (type 2, type 3, resp.)
\end{definition}

\begin{definition}\label{fin}
We label the three different faces of the cube by $(v_i)_{i=1\dots 3}$.
\end{definition}

\begin{definition}\label{dircomdef}
Consider the billiard map $T$ inside the cube, and a point
$(m,\omega)\in X_\infty$. We define the complexity $p(n,m,\omega)$
by the complexity of the infinite word $v_{m,\omega}$ (see Definition \ref{compu}). We call it the \emph{directional complexity}.
\end{definition}

\subsection{Unfolding: definition and example}
The \emph{unfolding} is a very useful tool in the study of billiards behavior. 
Consider a billiard trajectory in a polyhedron. To
draw the orbit, we must reflect the line each time it hits a face
of the polyhedron. The unfolding consists in reflecting the
polyhedron through the face and continuing on the same line.

Althought we deal with the cube the figures are made in the case of the square.

\begin{example} 
Example of the cube.\\
The billiard orbit of $(m,\omega)$ appears to be as the
sequence of intersections of the line $m+\mathbb{R}\omega$ with
the lattice $\mathbb{Z}^3$, see Figure \ref{carrefig}. In the left picture we represent one billiard orbit inside the square on dash points. It is unfolded in the line which intersects $\mathbb{Z}^2$. 

\begin{figure}[h]
\includegraphics[width= 4cm]{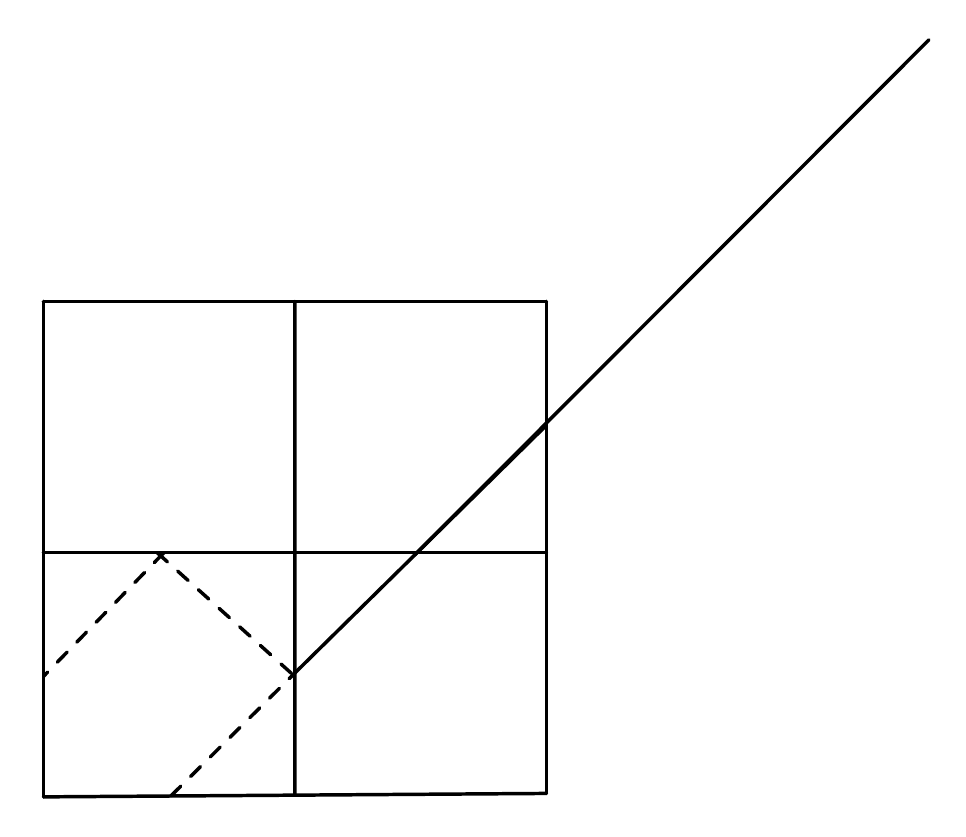}
\includegraphics[width= 4cm]{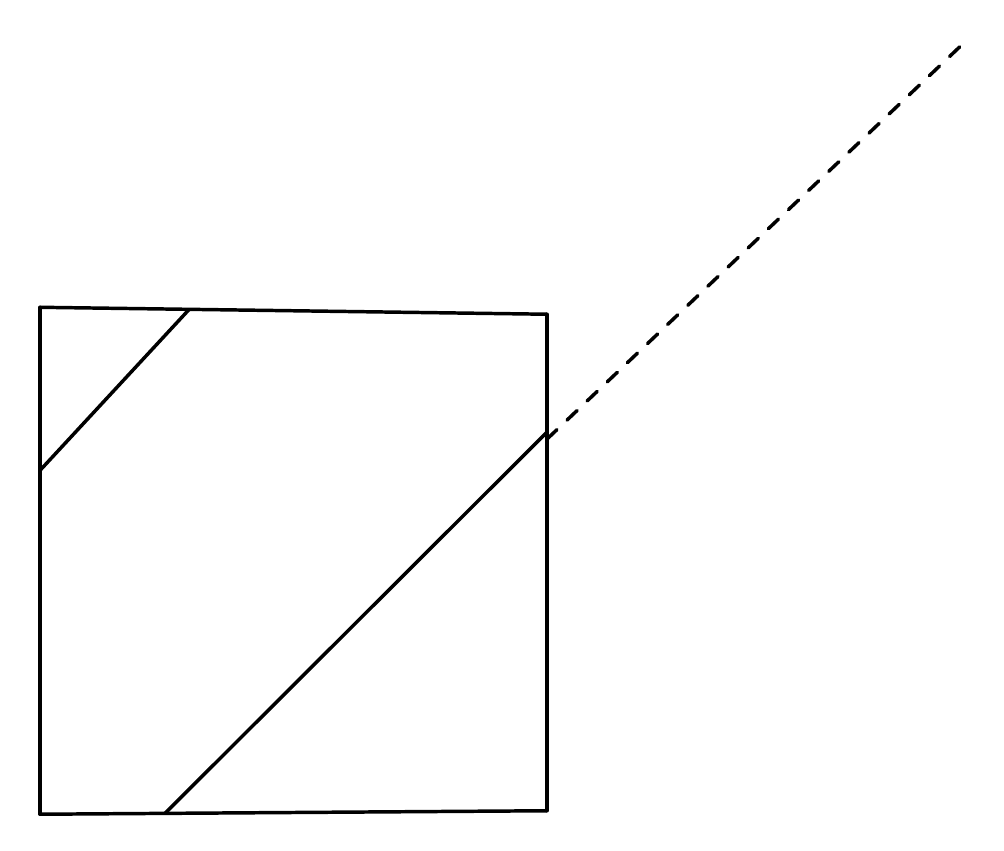}
\caption{Unfolding.}\label{carrefig}
\end{figure}

On the right picture we see that the study of the billiard orbit can be made on the big square where we identify the opposite sides. Then we obtain a torus, and the map is a translation on this torus:
\end{example}

 \begin{definition}\label{translation}
 For $\omega\in\mathbb{R}^3$, a translation $T_\omega$ of the torus is a map defined as follows.
 $$\mathbb{R}^3/\mathbb{Z}^3\rightarrow \mathbb{R}^3/\mathbb{Z}^3$$
$$T_\omega :(x,y,z)\mapsto (x,y,z)+\omega.$$
\end{definition}

Figure \ref{carrefig} explains the following result:
\begin{lemma}\label{unfold}
Let $\omega\in\mathbb{R}^3$, and consider the billiard map $T$ in a cube. \\
Then it is equivalent to study the orbit $(T^n(m,\omega))_n$ or the orbit $(T_\omega^n(m))_n$. 
 \end{lemma}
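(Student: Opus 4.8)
The plan is to make the unfolding picture of Figure \ref{carrefig} precise, converting each reflection of the billiard trajectory into a straight continuation of a single line in $\mathbb{R}^3$, and then to project this line to the torus $\mathbb{R}^3/\mathbb{Z}^3$ so that the billiard coding is recovered from the translation $T_\omega$. First I would set up the \emph{developing} (unfolding) map. Starting from $(m,\omega)\in X_\infty$, the billiard map produces a sequence of segments inside $C=[0;1]^3$ joining consecutive boundary points $m=m_0,m_1,m_2,\dots$, where at each $m_k$ the direction is reflected across the face containing $m_k$ by the mirror law. I would check, face by face, that reflecting the \emph{trajectory} across a face is the same as reflecting the \emph{cube} across that face and letting the trajectory continue along the same straight line: if $s$ is the reflection over the face hit at $m_k$, the portion of orbit after $m_k$ is the image under $s$ of the straight prolongation of the incoming segment. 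Composing these reflections, the entire billiard orbit becomes the image, under a product of reflections, of the single line $m+\mathbb{R}\omega$.

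Second I would identify the reflected copies of the cube with the standard cubic tiling of $\mathbb{R}^3$. The reflections occurring above are exactly those across the planes $\{x_i=k\}$ for $i\in\{1,2,3\}$ and $k\in\mathbb{Z}$, and the images of $C$ under the group they generate tile $\mathbb{R}^3$ with vertex set $\mathbb{Z}^3$. Consequently the boundary points $m_k$ are in bijection, via the folding map $\rho:\mathbb{R}^3\to[0;1]^3$ (a reflection in each coordinate), with the successive intersection points of the line $m+\mathbb{R}\omega$ with the union of lattice hyperplanes $\bigcup_i\{x_i\in\mathbb{Z}\}$. Here the hypothesis $(m,\omega)\in X_\infty$ of Definition \ref{defbill} is used: it guarantees that the line never meets an edge of the tiling, so the crossings are transverse and happen one face-type at a time, which is what makes the coding by faces well defined.

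Third I would project to the torus. Since $\rho$ factors through the quotient $\pi\colon\mathbb{R}^3\to\mathbb{R}^3/\mathbb{Z}^3$, the image of $m+\mathbb{R}\omega$ under $\pi$ is exactly the orbit of the translation $T_\omega$ of Definition \ref{translation} issued from $\pi(m)$, and a crossing of the lattice hyperplane $\{x_i\in\mathbb{Z}\}$ becomes a crossing of the single coordinate hyperplane $\{x_i=0\}$ in $\mathbb{R}^3/\mathbb{Z}^3$. Because parallel faces of $C$ carry the same symbol, recording the face-type along the billiard orbit $(T^n(m,\omega))_n$ is the same as recording which coordinate hyperplane is crossed by the orbit of $T_\omega$ through $\pi(m)$; the two pieces of data coincide term by term, which yields the asserted equivalence.

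The main obstacle I anticipate is bookkeeping rather than geometry: the billiard map $T$ advances "face to face'' while $T_\omega$ advances by the fixed vector $\omega$, so the two discrete time-scales do not literally agree. I would resolve this by stating the equivalence at the level of the common underlying line $m+\mathbb{R}\omega$ and its ordered crossing times $t_0<t_1<\cdots$, and then verifying that both orbits read off precisely the same ordered sequence of hyperplane crossings. The one computation to carry out carefully is the matching of the mirror law with the reflection of the cube, namely that the unfolded line keeps the fixed direction $\omega$ while the folded direction $\rho$ absorbs all the sign changes coming from the reflections $s$.
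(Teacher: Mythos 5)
Your proof is correct and takes essentially the same approach as the paper, which offers no written argument for this lemma beyond pointing to the unfolding picture of Figure~\ref{carrefig}: you simply make that classical reflection-group/unfolding argument precise. One small wording slip worth fixing: the folding map $\rho$ is $2$-periodic in each coordinate, so it factors through $\mathbb{R}^3/(2\mathbb{Z})^3$ rather than through $\mathbb{R}^3/\mathbb{Z}^3$; what genuinely descends to the torus is the face-type coding (because parallel faces carry the same symbol), which is exactly the repair your own next sentence supplies.
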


\subsection{Minimality}

\begin{definition}
A direction $\omega\in\mathbb{RP}^2$ is called a minimal direction
if for all point $m$, the sequence $(T^n(m,\omega)\cap
\partial{C})_{n\in\mathbb{N}}$ is dense in $X_{\infty}$.
\end{definition}
The following lemma deals with minimality of billiard words in the one-dimensional case. This minimality 
depends on algebraic properties of the translation direction. It will be used in the proof of the last cases of our main theorem.

\begin{lemma}\label{carre}
Let $\omega=(a,b)$ be an unit vector of
$\mathbb{R}^2$.  Consider the
billiard map in the square. Then

$i)$ The direction $\omega$ is a minimal direction if and
only if $a,b$ are rationally independent over $\mathbb{Q}$.

$ii)$ If the direction is not a minimal one, then for all point $(m,\omega)\in X_\infty$,
the billiard orbit of $(m,\omega)$ is periodic.
\end{lemma}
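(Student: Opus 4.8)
The plan is to use the unfolding Lemma \ref{unfold} to transport the entire question to the orbit of the translation $T_\omega$ (Definition \ref{translation}) on the torus $\mathbb{T}^2=\mathbb{R}^2/\mathbb{Z}^2$, and then to invoke the classical Kronecker density theorem. By Lemma \ref{unfold}, the billiard orbit of $(m,\omega)$ is, up to the coding by faces, the orbit of $m$ under the linear flow $\phi_t(x,y)=(x,y)+t\omega \bmod \mathbb{Z}^2$: each passage through a face of the square corresponds to a crossing of a line of the integer grid $\mathbb{Z}^2$. Density of the billiard orbit in $X_\infty$ is therefore equivalent to density of the flow orbit $\{\phi_t(m):t\geq 0\}$ in $\mathbb{T}^2$.

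For part $i)$ I would argue both implications through this reduction. For the contrapositive of the direct implication, suppose $a,b$ are rationally dependent, so that there is a non-zero integer pair $(p,q)$ with $qa-pb=0$. The function $f(x,y)=qx-py$ descends to a continuous map $\mathbb{T}^2\rightarrow\mathbb{T}^1$, and along the orbit $f(\phi_t(m))=f(m)+t(qa-pb)=f(m)$ is constant; hence the orbit stays in the proper closed subset $f^{-1}(f(m))$, a finite union of closed geodesics, and is not dense. Thus $\omega$ is not minimal. Conversely, if $a,b$ are rationally independent, then by the Kronecker--Weyl theorem the orbit $\{m+t\omega \bmod \mathbb{Z}^2\}$ is dense (in fact equidistributed) in $\mathbb{T}^2$; transporting back through Lemma \ref{unfold}, the billiard orbit is dense in $X_\infty$ for every starting point $m$, i.e.\ $\omega$ is minimal.

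For part $ii)$, if $\omega$ is not minimal then by $i)$ the coordinates $a,b$ are rationally dependent, so $\omega$ is proportional to an integer vector: since $\|\omega\|=1$ there are coprime integers $(p,q)$ with $\omega=(p,q)/\sqrt{p^2+q^2}$, including the degenerate axis-parallel cases $q=0$ or $p=0$. Setting $t_0=\sqrt{p^2+q^2}$ gives $t_0\omega=(p,q)\in\mathbb{Z}^2$, so that $\phi_{t_0}(m)=m$ for every $m$: the flow orbit is a closed geodesic. Translating back, the billiard trajectory of any $(m,\omega)\in X_\infty$ closes up, and the associated sequence of faces is periodic; this holds for all $m$.

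The principal technical point is the rigorous identification, via the unfolding, of \emph{the billiard orbit is dense in $X_\infty$} with \emph{the flow orbit is dense in $\mathbb{T}^2$}, together with the careful treatment of the excluded set $X'$ (orbits meeting a vertex of the square, or running along a side). Once this dictionary is in place, the arithmetic dichotomy is exactly the Kronecker density theorem and its periodic counterpart, which supply the remainder of the argument with no further difficulty.
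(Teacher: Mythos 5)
Your proof is correct and follows essentially the same route the paper relies on: the paper states this lemma without proof as a recalled classical fact, and for the analogous cube statement says only that the proof rests on Kronecker's lemma (citing Hardy--Wright), which is exactly your unfolding-plus-Kronecker argument. The one cosmetic point is that unfolding the unit square naturally yields the torus $\mathbb{R}^2/(2\mathbb{Z})^2$ rather than $\mathbb{R}^2/\mathbb{Z}^2$, so in part $ii)$ the billiard trajectory may close up only after time $2t_0$ (a translation by $(p,q)$ with $p$ or $q$ odd folds back with a reflected direction); this changes the period but not the conclusion of periodicity.
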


\begin{lemma}
In the cube, a direction $\omega$ is a minimal direction if and
only if the numbers $(\omega_i)_{i\leq 3}$ are independent over
$\mathbb{Q}$.
\end{lemma}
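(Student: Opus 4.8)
The plan is to reduce the billiard to a linear flow on the torus and then invoke Kronecker's theorem on the closure of a one-parameter subgroup. By Lemma \ref{unfold} the billiard trajectory of $(m,\omega)$ unfolds to the straight line $m+\mathbb{R}\omega$, whose image on $\mathbb{T}^3=\mathbb{R}^3/\mathbb{Z}^3$ is the orbit of the linear flow $\phi_t(x)=x+t\omega$. Under this identification the boundary $\partial C$ corresponds to the union of coordinate sub-tori $\Sigma=\{x_1=0\}\cup\{x_2=0\}\cup\{x_3=0\}$, and the billiard map $T$ is the first-return map of $\phi_t$ to $\Sigma$. Thus the boundary orbit $(T^{n}(m,\omega)\cap\partial C)_n$ is precisely the sequence of intersections of the flow orbit of $m$ with $\Sigma$, the mirror law acting on the direction component only through the finite group of sign changes of the coordinates of $\omega$. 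Note that this framing is consistent with $\omega$ being projective: the condition to be proved is invariant under rescaling $\omega$, as is minimality of $\phi_t$, whereas the time-one translation would not be scale-invariant.

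Granting this dictionary, minimality of the direction $\omega$ -- density of every boundary orbit in $X_\infty$ -- becomes density in $\Sigma$ of the return orbit of the flow, for every starting point. Since the reflection group is finite, it contributes nothing to the density question, so the statement reduces to showing that the intersections of every flow orbit with $\Sigma$ are dense in $\Sigma$. First I would dispose of the degenerate case: if some $\omega_i=0$ then $\omega$ is tangent to a pair of faces, the numbers $\omega_1,\omega_2,\omega_3$ are trivially rationally dependent, the orbit is confined to a proper subset, and the direction is not minimal, which matches the claimed equivalence. When all $\omega_i\neq 0$, the set $\Sigma$ is a genuine cross-section transverse to $\phi_t$, and by the standard correspondence between a minimal flow and the minimality of its first-return map to a cross-section, density of the return orbit in $\Sigma$ holds for all starting points if and only if $\phi_t$ is minimal on $\mathbb{T}^3$, i.e. every orbit $\{m+t\omega\}$ is dense in $\mathbb{T}^3$.

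It then remains to characterise minimality of the linear flow. The closure of the one-parameter subgroup $\{t\omega:t\in\mathbb{R}\}\bmod\mathbb{Z}^3$ is a closed connected subgroup, hence a subtorus $H$; by Pontryagin duality $H=\mathbb{T}^3$ if and only if the only character $k\in\mathbb{Z}^3$ trivial on the orbit is $k=0$, that is, the only $k\in\mathbb{Z}^3$ with $\langle k,\omega\rangle=0$ is $k=0$. This is exactly the $\mathbb{Q}$-linear independence of $\omega_1,\omega_2,\omega_3$. Since for a translation flow on a compact abelian group density of one orbit is equivalent to density of all orbits, combining the three steps yields the stated equivalence; as a sanity check, the same argument one dimension lower recovers Lemma \ref{carre}.

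I expect the main obstacle to lie in the second step, namely in making the passage from density in $X_\infty$ to density of the flow on $\mathbb{T}^3$ fully rigorous. One must justify that the direction coordinate of $X_\infty$, on which the reflection group acts, enters only with finite multiplicity and hence does not affect density, and one must handle the cross-section argument with care near tangency to a face. The unfolding step is furnished by Lemma \ref{unfold}, and the Kronecker step is entirely standard, so the geometric bookkeeping of the boundary coding is where the real work sits.
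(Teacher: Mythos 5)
Your proposal is correct and follows essentially the same route as the paper, which disposes of this lemma in a single sentence by invoking Kronecker's lemma (citing Hardy--Wright) after the unfolding of Lemma \ref{unfold}. In fact you supply more detail than the paper does -- notably the cross-section reduction and the observation that one must use the linear \emph{flow} rather than the time-one translation so that the criterion stays scale-invariant -- all of which is consistent with the intended argument.
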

The proof of this lemma is based on Kronecker's lemma, see \cite{Ha.Wr}. 

\subsection{Billiard complexity}
We consider the coding of the billiard map defined in Section 2.3. 

\begin{definition}
For any $n\geq1$ let $s(n):=p(n+1)-p(n)$. For $v \in \mathcal{L}(n)$ let

$$m_{l}(v)= card\{a\in \mathcal{A},va\in \mathcal{L}(n+1)\},$$
$$m_{r}(v)= card\{b\in \mathcal{A},bv\in \mathcal{L}(n+1)\},$$
$$m_{b}(v)= card\{a\in \mathcal{A}, b\in \mathcal{A},bva\in \mathcal{L}(n+2)\}.$$

A word is called \emph{right special} if $m_{r}(v)\geq 2$, \emph{left special} if
$m_{l}(v)\geq 2$ and \emph{bispecial} if it is right and left special.
Let $\mathcal{BL}(n)$ be the set of bispecial words of length $n$. 
\end{definition}

Cassaigne \cite{Ca} has proved the following result, which can be also found in \cite{Ca.Hu.Tr}:
\begin{lemma}
$$s(n+1)-s(n)=\sum_{v\in \mathcal{BL}(n)}{[m_{b}(v)-m_{r}(v)-m_{l}(v)+1]}.$$
\end{lemma}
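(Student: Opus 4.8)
The plan is to prove the identity by summing a purely local quantity over \emph{all} factors of length $n$, not only over the bispecial ones, and then to show that the local contribution vanishes away from $\mathcal{BL}(n)$. Concretely, I would attach to each $v\in\mathcal{L}(n)$ the quantity $\delta(v):=m_{b}(v)-m_{l}(v)-m_{r}(v)+1$ and establish two independent facts: first that $\sum_{v\in\mathcal{L}(n)}\delta(v)=s(n+1)-s(n)$, and second that $\delta(v)=0$ whenever $v$ fails to be bispecial. Combining the two immediately yields the stated formula, since the summand survives only on $\mathcal{BL}(n)$.

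For the first fact I would use four elementary counting identities obtained from unique decompositions of factors. Every word of $\mathcal{L}(n+1)$ has a unique prefix of length $n$, which gives $\sum_{v}m_{l}(v)=p(n+1)$; reading from the right instead gives $\sum_{v}m_{r}(v)=p(n+1)$; every word of $\mathcal{L}(n+2)$ has a unique central factor of length $n$ obtained by deleting its first and last letters, which gives $\sum_{v}m_{b}(v)=p(n+2)$; and trivially $\sum_{v}1=p(n)$. Substituting these into $\sum_{v}\delta(v)$ collapses the sum to $p(n+2)-2p(n+1)+p(n)$, which is exactly $[p(n+2)-p(n+1)]-[p(n+1)-p(n)]=s(n+1)-s(n)$. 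This step is routine once the decompositions are spelled out, and it is here that the factoriality of the language guarantees that every prefix, suffix, and central factor involved actually lies in $\mathcal{L}(n)$.

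The main obstacle is the second fact, the local vanishing of $\delta(v)$ for non-bispecial $v$, where the factorial and extendable hypotheses are essential. Suppose $v$ is not left special, so $m_{l}(v)=1$ and $v$ admits a single right extension $a_{0}$. I would show $m_{b}(v)=m_{r}(v)$ by exhibiting a bijection between the left extensions $b$ of $v$ and the bilateral extensions $bva$: any factor $bva$ forces $a=a_{0}$, while conversely, given $bv\in\mathcal{L}(n+1)$, extendability produces some $bva'\in\mathcal{L}(n+2)$, and factoriality forces $va'\in\mathcal{L}(n+1)$, whence $a'=a_{0}$ by uniqueness of the right extension. Thus $\delta(v)=m_{r}(v)-1-m_{r}(v)+1=0$. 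The case where $v$ is not right special is symmetric, exchanging the roles of left and right, so $\delta(v)=0$ there as well. Consequently $\delta(v)$ can be nonzero only when $m_{l}(v)\geq2$ and $m_{r}(v)\geq2$, that is, exactly on $\mathcal{BL}(n)$, and the sum from the first fact may be restricted to bispecial words, completing the argument.
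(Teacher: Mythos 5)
Your proof is correct, and it is essentially Cassaigne's original double-counting argument: the four identities $\sum_v m_l(v)=\sum_v m_r(v)=p(n+1)$, $\sum_v m_b(v)=p(n+2)$, $\sum_v 1=p(n)$ give $\sum_{v\in\mathcal{L}(n)}\delta(v)=s(n+1)-s(n)$, and extendability plus factoriality force $\delta(v)=0$ off the bispecial words. The paper itself supplies no proof at all here --- the lemma is quoted from Cassaigne \cite{Ca} (see also \cite{Ca.Hu.Tr}) --- so your write-up fills in exactly the argument being cited; note also that you correctly worked with the paper's (swapped) convention in which $m_l(v)$ counts the extensions $va$ and $m_r(v)$ counts the extensions $bv$, which is harmless since the formula is symmetric in the two counts.
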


\begin{lemma}\label{Ta}\cite{Ta}
For a minimal direction the directional complexity is independent of the
initial point $m$.
\end{lemma}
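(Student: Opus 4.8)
The plan is to reduce the statement to the torus translation and then to use minimality to show that the language of factors of $v_{m,\omega}$ does not depend on $m$. First I would invoke Lemma \ref{unfold} to replace the billiard map $T$ by the translation $T_\omega$, so that $v_{m,\omega}$ becomes the coding of the orbit $(T_\omega^k m)_k$ with respect to the partition $\mathcal{P}=\{P_1,P_2,P_3\}$ of the torus into the three regions labelled by the letters of $\mathcal{A}$; by construction $v_{m,\omega}(k)=a$ if and only if $T_\omega^k m\in P_a$. For a word $w=w_0\dots w_{n-1}$ of length $n$ I would introduce the cylinder
$$[w]=\bigcap_{j=0}^{n-1}T_\omega^{-j}P_{w_j},$$
so that $w$ is a factor of $v_{m,\omega}$ if and only if the orbit of $m$ meets $[w]$, i.e.\ $T_\omega^k m\in[w]$ for some $k\ge 0$. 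Proving the lemma then amounts to proving that the set of words $w$ whose cylinder is visited by the orbit is the same for every admissible starting point.

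Next I would show that any cylinder containing an orbit point of a point of $X_\infty$ has nonempty interior. Indeed, the condition $(m,\omega)\in X_\infty$ says exactly that the whole orbit avoids the edges of the cube, hence, after unfolding, that each $T_\omega^k m$ lies in the interior of the region containing it and never on $\partial\mathcal{P}=\bigcup_a \partial P_a$. Consequently, if $w$ is a factor of $v_{m,\omega}$ realised at time $k$, then
$$T_\omega^k m\in\bigcap_{j=0}^{n-1}T_\omega^{-j}\,\mathrm{int}(P_{w_j})=\mathrm{int}([w]),$$
which, being a finite intersection of open sets, is a nonempty open set.

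Finally I would close the argument using minimality. Fix two points $(m,\omega),(m',\omega)\in X_\infty$ and let $w$ be any factor of $v_{m,\omega}$. By the previous step $\mathrm{int}([w])$ is a nonempty open set, and by the definition of a minimal direction the orbit of $m'$ is dense in $X_\infty$; hence it meets $\mathrm{int}([w])\subset[w]$, so $w$ is also a factor of $v_{m',\omega}$. Exchanging the roles of $m$ and $m'$ yields the reverse inclusion, so the two languages coincide; in particular $\mathrm{card}(\mathcal{L}_n)$ is the same for both, that is $p(n,m,\omega)=p(n,m',\omega)$ for every $n$, which is the claim.

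The step I expect to be the most delicate is the second one: one must check carefully that membership in $X_\infty$ genuinely forces every orbit point into the interior of a partition piece, so that cylinders visited by such orbits are truly open, and that density in $X_\infty$ transfers, through the unfolding, to the density of the translation orbit needed to conclude. The remaining parts are a routine topological dynamics argument.
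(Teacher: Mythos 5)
The paper does not actually prove this lemma: it carries the citation \cite{Ta} and is imported as a known result, so there is no internal proof to compare yours against. On its own merits, your argument is the standard topological-dynamics proof and its core is sound: cylinders of factors realised by orbits of points of $X_\infty$ have nonempty interior, and minimality (every orbit dense in $X_\infty$) then forces every such open cylinder to be visited by every admissible orbit; exchanging the two points gives equality of languages and hence of complexity. The topology is used correctly (an open set containing a point of the closure of an orbit must meet the orbit).

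One caveat in your modelling step, which you should fix before calling the proof complete. The $k$-th letter of $v_{m,\omega}$ is \emph{not} determined by the position of $T_\omega^k m = m+k\omega \bmod \mathbb{Z}^3$: the billiard letters are indexed by the successive crossings of the unfolded line with the integer hyperplanes, and the time of the $k$-th crossing is not $k$ (it is roughly $k/(\omega_1+\omega_2+\omega_3)$). So there is no partition $\{P_1,P_2,P_3\}$ of $\mathbb{T}^3$ with $v_{m,\omega}(k)=a$ if and only if $T_\omega^k m\in P_a$. The correct phase space is the section $\Sigma\subset\mathbb{T}^3$ given by the union of the images of the three coordinate hyperplanes (the three faces), the map is the first-return map of the linear flow in direction $\omega$ to $\Sigma$ --- this is the ``rotation on $\mathbb{T}^2$'' of the title --- and the coding partition is the partition of $\Sigma$ into the three faces. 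To be fair, the paper's Lemma \ref{unfold} makes the same conflation, so you are following its lead; and your argument survives the correction: the return map is a piecewise translation of $\Sigma$, continuous off a closed set with empty relative interior which orbits of points of $X_\infty$ avoid, so realised cylinders still have nonempty interior relative to $\Sigma$, and the paper's definition of minimality --- density of $(T^n(m,\omega)\cap\partial C)_n$ in $X_\infty$ --- is precisely density of return-map orbits, which is what your final step requires.
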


{\bf Notations}: This result implies that we can omit the
initial point in the notation $p(n,m,\omega)$, with the assumption
that the orbit of $(m,\omega)$ is dense in $X$. In other cases we will denote by $p(n,\omega)$ the maxima of $p(n,m,\omega)$ over all admissible points $m$, see Definitions \ref{defbill} and \ref{dircomdef}.

\begin{definition}
In a polyhedron a generalized diagonal of direction $\omega$
between two edges is the union of all the billiard trajectory of
direction $\omega$ between two points of these edges. We say it is
of length $n$ if each billiard trajectory hits $n$ faces between
the two points.
\end{definition}
If we fix the initial edge we can describe the edges of length
$n$ by the following result.
\begin{lemma}\cite{moi2}\label{long}
Fix an edge $A$ of the initial cube. The edge $B$ is at length $n$
from the edge $A$ if and only  if for all point
$(b_1,b_2,b_3)$ of $B$ we have
$$\lfloor b_1\rfloor+\lfloor b_2\rfloor+\lfloor b_3\rfloor=n.$$
\end{lemma}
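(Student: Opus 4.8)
The plan is to reduce everything to the unfolded picture provided by Lemma \ref{unfold}, where the billiard trajectory becomes a straight segment in $\mathbb{R}^3$, each face of the cube becomes an integer hyperplane $\{x_i = k\}$ with $k\in\mathbb{Z}$, and each edge of the cube unfolds to an edge of the integer lattice, i.e.\ a segment on which two coordinates are fixed integers and the third ranges over a unit interval. Under this dictionary the length of a generalized diagonal --- the number of faces it hits --- is exactly the number of integer hyperplanes that the corresponding straight segment crosses strictly between its two endpoints. So the whole statement becomes a counting assertion about a straight segment meeting a lattice of hyperplanes.

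First I would normalize the configuration. By the symmetries of the cube I may reflect the direction so that every coordinate of $\omega$ is positive; this leaves the face count unchanged while making the unfolded segment strictly increasing in each coordinate. I also place the fixed edge $A$ so that it issues from the origin, taking $A$ to lie on one of the coordinate axes through $0$. This is the normalization implicit in the statement, since the floors in the formula are counted from $0$. A point of the unfolded image of $B$ is then a point $(b_1,b_2,b_3)$ with positive coordinates, and a starting point on $A$ has two coordinates equal to $0$ and the third in $[0,1]$.

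The key computation is then carried out direction by direction. Because the segment is monotone increasing in each coordinate and starts with all coordinates in $[0,1)$, the hyperplanes $\{x_i = k\}$ that it meets while the $i$-th coordinate runs from its initial value up to $b_i$ are precisely $x_i = 1,2,\dots,\lfloor b_i\rfloor$; hence the number of crossings in the $i$-th direction is $\lfloor b_i\rfloor$. Summing over $i=1,2,3$, the total number of faces hit is $\lfloor b_1\rfloor + \lfloor b_2\rfloor + \lfloor b_3\rfloor$. Since on an edge $B$ of the lattice only one coordinate varies, and it varies inside a single unit interval, each $\lfloor b_i\rfloor$ is constant along $B$; this is what makes the condition ``for all point $(b_1,b_2,b_3)$ of $B$'' well posed and independent of the chosen point. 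Equating this count with $n$ yields the stated equivalence in both directions.

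The step I expect to require the most care is the bookkeeping of the crossings at the boundary values. One must check that the hyperplanes passing through the initial edge $A$ are \emph{not} counted, since they are starting faces rather than faces that are hit, and symmetrically that the edge $B$ is reached exactly after the last counted crossing rather than one too many or one too few. This is precisely where the half-open conventions and the positivity of $\omega$ intervene, and where a careless off-by-one would corrupt the formula. Once the convention that $A$ emanates from the origin is fixed and the coordinates are taken monotone increasing, the remaining verification is routine.
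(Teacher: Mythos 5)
The paper gives no proof of this lemma at all --- it is imported from \cite{moi2} --- so your attempt can only be judged against the standard argument, and your overall strategy is certainly that argument: unfold via Lemma \ref{unfold} so the trajectory becomes a straight segment, identify the ``length'' with the number of integer hyperplanes crossed, count coordinate by coordinate, and sum. That reduction, together with the normalizations (positive direction, $A$ through the origin), is the right skeleton.

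However, the step you yourself single out as the delicate one is not resolved, and as written your account of it is internally inconsistent, which is a genuine gap rather than a routine verification. On the edge $B$ two of the three coordinates are integers, say $b_1=p$, $b_2=q$, $b_3=s\in(r,r+1)$, precisely because $B$ is the intersection of the hyperplanes $x_1=p$ and $x_2=q$; the segment crosses these two hyperplanes exactly at its terminal point and not before. So if, as you state in your last paragraph, ``the edge $B$ is reached exactly after the last counted crossing'' (i.e., crossings occurring at the endpoint are excluded), then the counts in the first two directions are $p-1$ and $q-1$, not $\lfloor b_1\rfloor$ and $\lfloor b_2\rfloor$, and the total is $\lfloor b_1\rfloor+\lfloor b_2\rfloor+\lfloor b_3\rfloor-2$, contradicting the lemma. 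Your per-direction count of exactly $\lfloor b_i\rfloor$ is valid only under the opposite convention, namely that the two faces meeting along $B$ (or, symmetrically, the two through $A$) are included among the $n$ faces ``hit between the two points.'' A complete proof must fix this convention and verify the formula against it; the paper's own later use of the lemma settles which one is intended --- in the proof of Lemma \ref{fin2} a point on the face $X=iA$ is assigned combinatorial length $iA+\lfloor\cdot\rfloor+\lfloor\cdot\rfloor$, so the face containing the arrival point is counted --- which is the inclusive convention, not the one your bookkeeping paragraph adopts. A smaller point of the same nature: the displayed equality fails at the vertex of $B$ where the varying coordinate equals $r+1$, so ``for all points of $B$'' must be read on the half-open edge.
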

 We recall the result of \cite{moi1}
which will be useful in the following.
\begin{proposition}\label{d}
Assume the cube is coded with three letters such that two parallel faces correspond to the same letter.
Let $\omega$ be  an unit vector, which  is minimal for the cubic
billiard, then for all integer $n$ we have
$$s(n+1,\omega)-s(n,\omega)=N(n,\omega),$$
where $N(n,\omega)$ is the number of generalized diagonals of
direction $\omega$ and length $n$.
\end{proposition}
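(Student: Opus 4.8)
The plan is to combine Cassaigne's formula with the unfolding of the billiard and a local analysis at the edges of the cube. First I would invoke Lemma \ref{unfold} to replace the billiard map $T$ by the translation $T_\omega$ on $\mathbb{T}^3$, so that a letter of $v_{m,\omega}$ records which integer plane $\{x_i\in\mathbb{Z}\}$ the orbit crosses, and a factor of length $n$ corresponds to a segment of the orbit crossing $n$ such planes. Applying Cassaigne's formula then reduces the computation of $s(n+1,\omega)-s(n,\omega)$ to the sum $\sum_{v\in\mathcal{BL}(n)}[m_b(v)-m_r(v)-m_l(v)+1]$, so it suffices to understand the bispecial factors of length $n$ and their bilateral multiplicities.

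The central step is to set up a length-preserving correspondence between bispecial factors of length $n$ and generalized diagonals of direction $\omega$ and length $n$. I would work on a two-dimensional transversal and partition it into cells according to the coding of orbit segments of combinatorial length $n$; the cells are exactly the factors counted by $p(n)$, and their boundaries are the singular curves formed by the points whose orbit of length $n$ meets an edge of the cube. Extending a factor on one side amounts to resolving the branching at the edge reached at the beginning of the segment, and on the other side to the branching at the edge reached at the end; hence a factor is bispecial exactly when its segment both begins and ends at an edge, that is, when it supports a generalized diagonal. Lemma \ref{long}, which identifies the edges lying at combinatorial length $n$ from a fixed edge through the relation $\lfloor b_1\rfloor+\lfloor b_2\rfloor+\lfloor b_3\rfloor=n$, guarantees that the combinatorial length of the factor matches the length $n$ of the diagonal.

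It then remains to compute $m_b(v)-m_r(v)-m_l(v)+1$ for each such bispecial $v$. Each edge of the cube is the intersection of exactly two faces, coded by two distinct letters, so the orbit approaching, respectively leaving, an edge can be continued in exactly two ways; this gives $m_l(v)=m_r(v)=2$. Geometrically, the two singular curves meeting at the corresponding corner of the partition — the locus of orbits hitting the initial edge and the locus of orbits hitting the final edge — cross transversally, cutting a neighborhood of the corner into four cells, which are precisely the four admissible bilateral extensions $bva$. Thus $m_b(v)=4$ and $m_b(v)-m_r(v)-m_l(v)+1=(2-1)(2-1)=1$, so every generalized diagonal contributes exactly $1$ to the sum, yielding $s(n+1,\omega)-s(n,\omega)=N(n,\omega)$.

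The main obstacle is precisely the non-degeneracy claim of this last step: one must exclude that the orbit meets a vertex of the cube, where three faces meet and which would force $m_l\geq 3$, and that two singular curves are tangent or meet in a higher-order corner. Both degeneracies would require additional rational relations among the coordinates of $\omega$, and I would rule them out using the minimality hypothesis, namely the rational independence of the $\omega_i$ furnished by Kronecker's lemma, to ensure that every singularity of the length-$n$ partition is a simple transverse crossing of exactly two curves.
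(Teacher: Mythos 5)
You should know at the outset that the paper itself contains no proof of Proposition \ref{d}: it is explicitly recalled from \cite{moi1}, so your attempt can only be judged on its own merits. Your skeleton — unfold to the translation $T_\omega$, apply Cassaigne's formula, and match bispecial factors of length $n$ with generalized diagonals of length $n$ — is indeed the standard route and is presumably the one taken in \cite{moi1}. But your proof has a genuine gap, and it sits exactly where this paper needs the proposition.

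The gap is your final non-degeneracy claim, which is false. Minimality is equivalent to the rational independence of $\omega_1,\omega_2,\omega_3$ (equivalently of $1,\alpha,\beta$), but the ``higher-order corner'' degeneracy you need to exclude — a line of direction $\omega$ meeting \emph{three} edges of the lattice — is governed by rational relations among $1/\omega_1,1/\omega_2,1/\omega_3$, not among the $\omega_i$: this is precisely the computation in the proof of Lemma \ref{ordre}, where such a line produces the relation $\frac{a-c}{\omega_1}-\frac{b}{\omega_3}+\frac{d}{\omega_2}=0$. Such a relation is perfectly compatible with minimality; directions of this kind are exactly Case $(4)$ of the theorem, and Lemma \ref{ordre} together with Corollary \ref{cor} shows that for them triple configurations actually occur (whenever $A$ divides $n$). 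At such a configuration your local model breaks down: three singular curves pass through the point, not two; the would-be diagonal joining the two outer edges runs through a third edge, hence is not a legitimate billiard trajectory (it lies in $X'$) and must not be counted in $N(n,\omega)$; and the two factors adjacent to the triple point have $m_l=m_r=2$ but $m_b=3$, each contributing $0$ to Cassaigne's sum. Your uniform claim ``one transverse crossing, $m_b=4$, contribution $1$'' would instead produce a nonzero value of $s(n+1,\omega)-s(n,\omega)$ at these lengths, contradicting Corollary \ref{cor}, which gives $0$. So, as written, your argument proves the proposition only when both $(\omega_i)$ and $(1/\omega_i)$ are rationally independent — i.e.\ in Case $(5)$ — and fails precisely for the Case-$(4)$ directions to which the paper applies Proposition \ref{d}. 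A complete proof must analyze these degenerate configurations and show that both sides of the identity vanish there, rather than exclude them. A smaller, fixable issue of the same nature: a factor can be bispecial without supporting any diagonal (its cell may be cut by a start-singular curve and an end-singular curve that do not cross inside it); then $m_b=3$ and the contribution is $0$, so the identity survives, but your asserted equivalence ``bispecial iff it supports a diagonal'' and the uniform value $m_b=4$ both require this case analysis.
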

With the same hypothesis the next lemma proves that we can
construct at most two diagonals of combinatorial length $n$ in
this direction.
\begin{lemma}\label{diag}
If  $\omega$ is minimal for the billiard map inside a cube, then
we have
$$N(n,\omega)\leq 2\quad \forall n\in\mathbb{N}^*.$$
\end{lemma}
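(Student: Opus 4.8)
The plan is to pass through the unfolding (Lemma \ref{unfold}), replacing the billiard trajectories by straight segments of direction $\omega$ in $\mathbb{R}^3$ and reading a generalized diagonal of length $n$ as such a segment joining an edge $A$ of the cubic lattice to an edge $B$ lying in the layer $\{\lfloor b_1\rfloor+\lfloor b_2\rfloor+\lfloor b_3\rfloor=n\}$, as dictated by Lemma \ref{long}. Since $\omega$ is minimal its coordinates are rationally independent, in particular nonzero, so after reflecting the cube through its symmetry planes I may assume $\omega$ has strictly positive coordinates; then each coordinate is strictly monotone along a trajectory. Following the discussion preceding Lemma \ref{long}, I would fix the initial edge $A$ and bound the number of admissible endpoint edges $B$ reached at combinatorial length exactly $n$.

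The heart of the argument, and the step that produces the bound $2$ rather than $3$, is the claim that a generalized diagonal must join two edges of \emph{different} types (Definition \ref{numeroedge}). An edge is met precisely when two of the three coordinates are simultaneously integral, an edge of type $k$ corresponding to the two coordinates other than the $k$-th being integral. The hard part is to rule out a diagonal joining two edges of the same type: if a trajectory met two edges of type $3$, at parameters $t<t'$, then $x$ and $y$ would be integral at both instants, so $t'-t=(i'-i)/\omega_1=(j'-j)/\omega_2$ with $i'-i,\,j'-j\in\mathbb{Z}$ not both zero, forcing $\omega_1/\omega_2\in\mathbb{Q}$ and contradicting rational independence. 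Hence from the fixed edge $A$ the endpoint $B$ can only be of one of the two remaining types.

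It then remains to show that, for each of these two admissible types, at most one edge $B$ is reached at combinatorial length exactly $n$. Here I would exploit the coordinate shared by $A$ and $B$: if $A$ has type $3$ and $B$ has type $1$, the common coordinate $y$ increases by an integer $\Delta$, the elapsed parameter is $\Delta/\omega_2$, and by Lemma \ref{long} the length is an explicit strictly increasing function of $\Delta$ (of the shape $n=\Delta+\lfloor \omega_1\Delta/\omega_2\rfloor+\lceil\omega_3\Delta/\omega_2\rceil$ up to boundary corrections). Strict monotonicity forces $n$ to determine $\Delta$ uniquely; once $\Delta$ is fixed, the requirement that $A$ be the prescribed edge and that the terminal coordinate be integral pins down the starting point on $A$ and hence $B$. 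Thus each admissible type contributes at most one diagonal, and summing over the two types yields $N(n,\omega)\le 2$.

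The main obstacle I anticipate is making the type-exclusion and the monotonicity rigorous at the boundary. One must handle carefully the trajectories passing exactly through a lattice point, so that the floor/ceiling bookkeeping in the length formula is exact, and one must check that the requirement implicit in a genuine billiard segment — that no further edge be met strictly between $A$ and $B$ — does not disturb the count. In both places rational independence of the $\omega_i$ is the decisive tool: it guarantees that distinct lattice hits occur at distinct parameters and that the two simultaneous integrality conditions defining an edge of a given type cannot recur along one trajectory, which is exactly what forbids the appearance of a third diagonal.
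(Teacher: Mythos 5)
Your two preparatory steps are essentially correct and correspond to remarks in the paper's own argument: rational independence of the $\omega_i$ (minimality) does forbid a diagonal joining two edges of the same type, and, for a fixed \emph{ordered} pair of types, the integer increment $\Delta$ of the coordinate that is integral at both endpoints determines the trajectory, with a combinatorial length of the form $\Delta-1+\lfloor\omega_i\Delta/\omega_j\rfloor+\lfloor\omega_k\Delta/\omega_j\rfloor$ that is strictly increasing in $\Delta$; so each ordered pair of types contributes at most one diagonal of length $n$. The gap is in the final count. The quantity $N(n,\omega)$ counts \emph{all} generalized diagonals of direction $\omega$ and length $n$, whereas your bookkeeping fixes the initial edge $A$ and counts only the diagonals emanating from it. Since the initial edge may itself be of any of the three types, what your argument actually proves is $N(n,\omega)\le 3\times 2=6$, not $\le 2$. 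Your phrase ``the step that produces the bound $2$ rather than $3$'' shows the miscount: the naive bound is six ordered type pairs, and the same-type exclusion does not reduce that number.

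Moreover, the structural picture ``one initial edge, two admissible terminal types'' is not the correct pairing, as the paper's construction makes clear: there one launches a segment from a vertex $O$, lets it cross $n$ cubes to a point $M$ on a face, and translates $M$ along the two directions of that face to produce the two diagonals. Consequently the two diagonals of a given length are the two \emph{orderings} of one and the same unordered type pair, the pair being dictated by which face $M$ lies on (e.g.\ one diagonal from a type-$1$ edge to a type-$3$ edge and one from a type-$3$ edge to a type-$1$ edge, with the same increment $\Delta$ and, as your own length formula shows, the same length); in particular they start on \emph{different} edges. The ingredient missing from your proof is the synchronization across pairs: the lengths realized by the pair sharing coordinate $j$ are exactly $k-1$ for those indices $k$ such that the $k$-th face crossed by the vertex trajectory $t\mapsto t\omega$ is a crossing where the $j$-th coordinate is an integer; by minimality no two coordinates are integral simultaneously, so these three index sets are disjoint, and a given $n$ can be realized by at most one unordered pair, hence by at most two diagonals. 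Without this step (which the paper encodes in the vertex construction and its appeal to the symmetries of the cube), the bound cannot be brought below $6$.
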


\begin{figure}[hbt]
\begin{center}
\includegraphics[width= 6 cm]{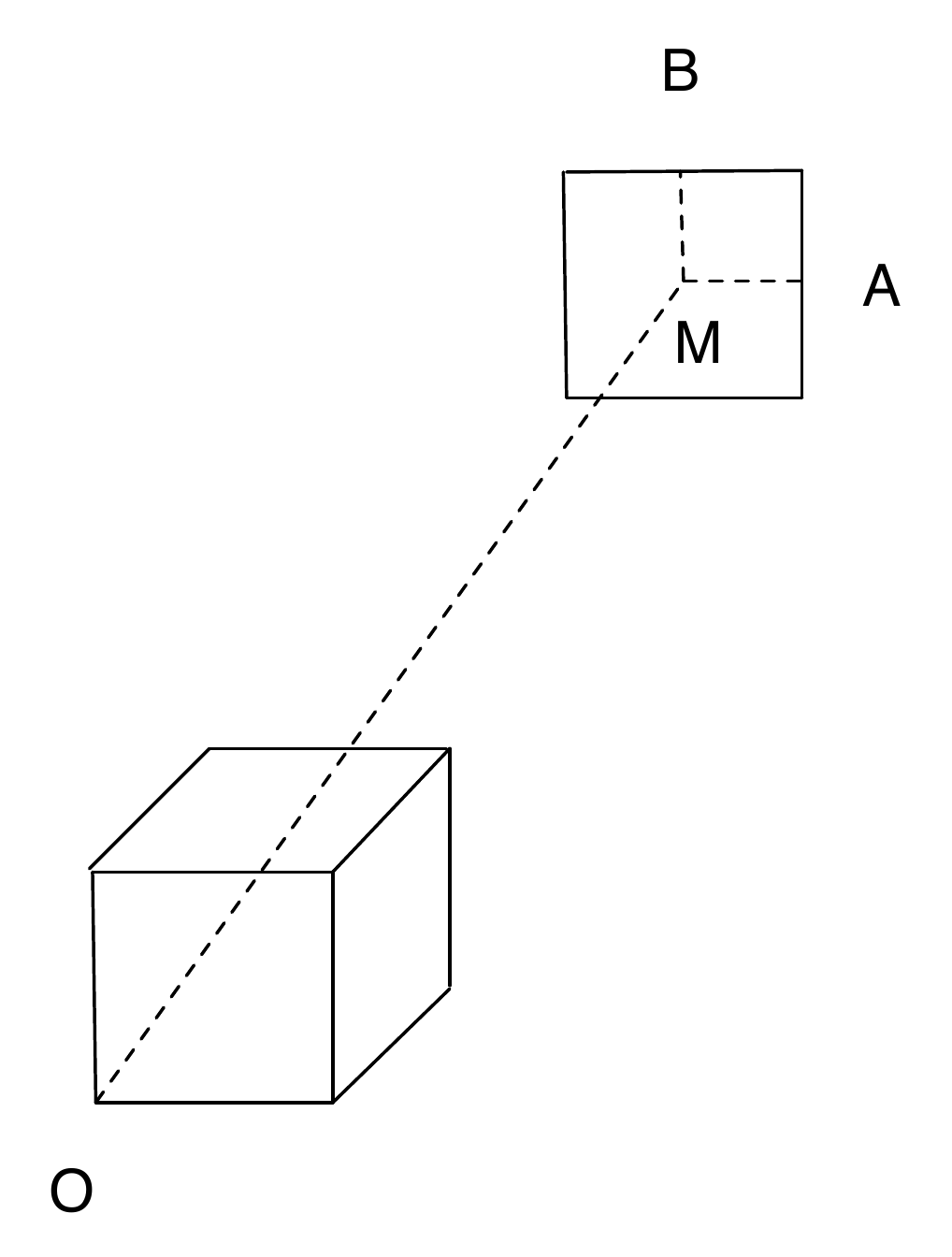}
\caption{Generalized diagonal in the cube.}\label{fig2}
\end{center}
\end{figure}
\begin{proof}
Let $O$ be a vertex of the cube and consider the segment of
direction $\omega$ which starts from $O$ and ends at a point $M$
after it passes through $n$ cubes. $M$ is a point of a face of an
unfolding cube, if we translate $M$ with a direction parallel to
one of the two directions of the face we obtain a point $A$ on an
edge and if we call $C$ the point such that
$\vec{OC}=\vec{MA}$\quad then $CA$ is a generalized diagonal, and
we have another one, $DB$ in the figure, arising from the second
translation.

The symmetries of the cube imply that these diagonals are the
only ones. It remains to prove that the two generalized diagonals
are of combinatorial length $n$.

The first thing to remark is that the condition of total
irrationality implies that a generalized diagonal can not begin
and end on two parallel edges.

To see that the combinatorial length is at most $n$ we can remark that the
sum of the length of the projections is twice the length of the trajectory,
so we just have to prove it for the projection, i.e.\ billiard in the
square, where it follows from the symmetry.
\end{proof}
The following Lemma recall some usual results from \cite{Pyt, Ta}.
It deals with minimality of billiard word in the one-dimensional case, depending on algebraic properties of the translation direction. 
\begin{lemma}\label{carre}
Consider a square coded with two letters.

$i)$ If $\theta$ is a minimal direction then we obtain
$p(n,m,\theta)=n+1$ for all $m$.

%$\bullet$ The billiard flow in the direction $\theta$ is a
%translation in direction $\theta$ on the torus
%$\mathbb{R}^2/\mathbb{Z}^2$.

$ii)$ The orthogonal projection of a cubic billiard trajectory
on a face of the cube is a billiard trajectory inside a square.
\end{lemma}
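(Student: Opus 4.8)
The plan is to handle the two parts separately, in both cases passing through the unfolding of Lemma~\ref{unfold}. For part $i)$ I would first unfold the square billiard of direction $\theta=(a,b)$: by Lemma~\ref{unfold} applied to the square, the orbit of $(m,\theta)$ carries the same combinatorial data as the orbit of $m$ under the translation $T_\theta$ on $\mathbb{R}^2/\mathbb{Z}^2$, and the two-letter coding records, in the order they occur, whether the line $m+\mathbb{R}\theta$ meets a vertical grid line $\{x\in\mathbb{Z}\}$ or a horizontal one $\{y\in\mathbb{Z}\}$. This is the cutting sequence of a line of slope $b/a$; by the characterization of minimal directions in the square, $\theta$ is minimal exactly when $a,b$ are rationally independent, i.e.\ when $b/a$ is irrational. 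In particular the resulting infinite word is not eventually periodic, so the Morse--Hedlund theorem already yields the lower bound $p(n,m,\theta)\geq n+1$.

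For the exact value I would reduce the planar picture to a circle rotation. Inducing $T_\theta$ on a transversal segment turns it into a rotation $R$ of $\mathbb{T}^1$ by an irrational angle $\beta$, under which the two-letter coding becomes the coding of $R$ by a partition $\mathcal{P}$ of the circle into two arcs with endpoints $0$ and $1-\beta$. The factors of length $n$ are then in bijection with the nonempty atoms of the refined partition $\bigvee_{k=0}^{n-1}R^{-k}\mathcal{P}$, and every atom is nonempty because the direction is minimal. The boundary of this refined partition is the set $\{R^{-k}(0),R^{-k}(1-\beta):0\leq k\leq n-1\}$, which telescopes to $\{-j\beta:0\leq j\leq n\}$; these are $n+1$ pairwise distinct points precisely because $\beta$ is irrational, so they cut the circle into $n+1$ arcs and $p(n,m,\theta)=n+1$. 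Independence of the initial point is exactly Lemma~\ref{Ta}, so the value $n+1$ holds for every admissible $m$. The main obstacle is the reduction to a genuine rotation together with the count of atoms; the single place where minimality is essential is the distinctness of the $n+1$ boundary points.

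For part $ii)$ I would again unfold, now in all three coordinates at once. Unfold the cube trajectory of direction $\omega=(\omega_1,\omega_2,\omega_3)$ to the straight line $L=m+\mathbb{R}\omega$ in $\mathbb{R}^3$, and let $\pi$ denote the orthogonal projection onto the plane carrying the face $<e_1,e_2>$. Then $\pi(L)=\pi(m)+\mathbb{R}(\omega_1,\omega_2)$ is a straight line, and $\pi$ maps the lattice $\mathbb{Z}^3$ onto $\mathbb{Z}^2$, so each crossing of a plane $\{x\in\mathbb{Z}\}$ or $\{y\in\mathbb{Z}\}$ by $L$ projects to a crossing of the corresponding grid line by $\pi(L)$, while the crossings of the planes $\{z\in\mathbb{Z}\}$ project to nothing. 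Refolding $\pi(L)$ into the unit square by the $\mathbb{Z}^2$-tiling and invoking Lemma~\ref{unfold} for the square, the folded image is exactly the square billiard trajectory of direction $(\omega_1,\omega_2)$. The only point to verify carefully is that $\pi$ intertwines the cube-folding with the square-folding: the reflections in the $x$ and $y$ directions commute with $\pi$, whereas a reflection in the $z$ direction becomes the identity after projection and is therefore invisible to $\pi$. This commutation is the heart of part $ii)$, and it reduces to that elementary observation.
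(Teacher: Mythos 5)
The paper offers no proof of this lemma at all: it is explicitly \emph{recalled} as a collection of ``usual results'' from \cite{Pyt, Ta}, so your proposal is not competing with an internal argument but supplying one. Both halves of your argument are the standard ones and are essentially correct. For part $ii)$, the whole content is the commutation you isolate at the end: the folding map $\mathbb{R}^3\to[0,1]^3$ acts coordinatewise, hence commutes with the orthogonal projection forgetting the third coordinate, crossings of $\{z\in\mathbb{Z}\}$ become invisible, and refolding $\pi(L)$ gives precisely the square billiard orbit of direction $(\omega_1,\omega_2)$. For part $i)$, the bijection between factors of length $n$ and nonempty atoms of $\bigvee_{k=0}^{n-1}R^{-k}\mathcal{P}$, the telescoping of the boundary set to $\{-j\beta:0\leq j\leq n\}$ ($n+1$ distinct points by irrationality), and the use of minimality to see that every atom is visited, is exactly the Morse--Hedlund computation; note it already gives independence from $m$ for every admissible point, so the appeal to Lemma \ref{Ta} is a convenience rather than a necessity.

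The one step you should tighten is the reduction ``inducing $T_\theta$ on a transversal segment turns it into a rotation under which the coding becomes the coding by two arcs.'' As stated this is slightly off. If the transversal is a single coding circle, say $\{x\in\mathbb{Z}\}$, the return map is indeed a rotation (by the slope), but a single return contributes one \emph{or two} letters ($a$ or $ab$, according to whether a horizontal grid line is crossed in between), so the billiard word is a substitution image of that rotation coding, not the coding itself, and the atom count does not apply verbatim. The correct section is the union of the two coding circles: its first-return map, parametrized by the invariant transverse (flux) measure, is a rotation by the normalized angle $\theta_2/(\theta_1+\theta_2)$ rather than by the slope $\theta_2/\theta_1$, and the partition of the section into the two circles becomes a partition into two arcs of lengths $\theta_1/(\theta_1+\theta_2)$ and $\theta_2/(\theta_1+\theta_2)$, i.e.\ with endpoints $0$ and $1-\beta$ for $\beta=\theta_2/(\theta_1+\theta_2)$, which is irrational exactly when the direction is minimal. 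With that correction, one letter is emitted per return, and your boundary-point count goes through verbatim to yield $p(n,m,\theta)=n+1$.
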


To conclude this section, we recall a complexity result for a linear flow inside a polygon.
\begin{lemma}\cite{Hu}\label{Hub}
Consider a minimal linear flow on a polygon with parallel opposite sides.  Code the flow with a letter by sides, then the orbit of a point is coded by an infinite word of sub-linear complexity. Moreover, the complexity does not depend on the initial point and on the direction.
\end{lemma}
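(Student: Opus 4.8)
The plan is to reduce the linear flow on the polygon to an interval exchange transformation and then to control the growth of the factor complexity through the special (in particular bispecial) factors of the coding.

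First I would unfold the flow as in Lemma \ref{unfold}: identifying the parallel opposite sides of the polygon turns the linear flow into a straight-line (translation) flow on a compact translation surface, whose cone singularities sit at the images of the vertices of the polygon. Fixing a minimal direction $\theta$ and a segment transverse to the flow, the first-return map is an interval exchange transformation $\mathcal{E}$ on finitely many intervals, whose discontinuities are exactly the finitely many points coming from the separatrices of the singularities. The coding of an orbit by the sides of the polygon is then refined by, or coincides with, the symbolic coding of the orbit of $\mathcal{E}$; in particular the factors of the coding word correspond to admissible itineraries of $\mathcal{E}$.

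Second, I would estimate the complexity via the special factors, using Cassaigne's formula \cite{Ca} together with a generalized-diagonal count in the spirit of Proposition \ref{d} and Lemma \ref{diag}. A factor is left or right special only when the corresponding family of parallel orbit segments is cut by a separatrix, i.e.\ when an orbit segment of combinatorial length $n$ joins two singularities; this is a generalized diagonal of length $n$. Since the polygon has finitely many vertices, the number of such diagonals, hence the number $s(n)=p(n+1)-p(n)$ of special factors of length $n$, is bounded by a constant $K$ depending only on the polygon and not on $n$. Summing gives $p(n)\leq Kn+p(0)$, so the complexity is at most linear in $n$, which is the sub-linear bound claimed and is to be contrasted with the quadratic complexity $n^2+n+1$ of the cube. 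Equivalently, one checks with Cassaigne's formula that for $n$ large every bispecial factor is neutral, so that $s(n)$ is eventually constant and $p$ is eventually affine.

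For the independence statements, independence of the initial point $m$ follows from minimality exactly as in Lemmas \ref{Ta} and \ref{carre}: for a minimal flow every orbit is dense and all orbits realize the same set of factors, hence the same complexity. Independence of the direction follows because $K$, and more precisely the eventual slope of $p$, is determined only by the combinatorial data of $\mathcal{E}$ (the number of exchanged intervals and the number of singularities), which depends on the polygon but not on the particular minimal direction. The main obstacle will be the second step: proving rigorously that special factors are produced only by orbit segments through singularities and that their number stays bounded uniformly in $n$. This requires a careful analysis of how the finitely many separatrices of the cone points organize the branching of the coding (the Rauzy graph of $\mathcal{E}$), and it is exactly this finiteness --- replaced in the higher-dimensional cube by the quadratic count of Proposition \ref{d} --- that keeps the complexity in the linear regime.
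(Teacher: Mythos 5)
The paper itself contains no proof of this lemma: it is imported wholesale from Hubert \cite{Hu}, so your proposal cannot be compared with an argument in the text, only with the cited one. That said, your first two steps follow the standard strategy of that literature (identify opposite sides to get a translation surface, reduce to an interval exchange, count special factors created by the singularities, conclude with Cassaigne's formula), and the linear upper bound is essentially sound, modulo one conceptual slip you should repair: a factor of length $n$ is right (resp.\ left) special when its cylinder of parallel orbit segments is split by a \emph{single} backward (resp.\ forward) separatrix of a vertex, whereas an orbit segment joining \emph{two} vertices is a generalized diagonal, and by Cassaigne's formula diagonals govern the second difference $s(n+1)-s(n)$, not $s(n)$; also $s(n)$ is not the number of special factors but the sum $\sum_v\bigl(m_r(v)-1\bigr)$ over right special $v$. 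The conclusion survives this correction: the number of right special factors of length $n$ is at most the number of vertex separatrices, so $s(n)\leq K$ with $K$ depending only on the polygon and the alphabet, whence $p(n)\leq Kn+1$; and since a fixed direction carries only finitely many generalized diagonals (each outgoing separatrix can end at no more than one first singularity), $s(n)$ is indeed eventually constant.

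The genuine gap is your last paragraph, the independence statement, which is in fact the hard part of \cite{Hu}. First, the quantity you invoke, "the combinatorial data of $\mathcal{E}$", is not an invariant of the polygon: the number of intervals exchanged by a first-return map depends on the direction and on the chosen transversal. What is direction-independent is the singularity data (cone angles at the vertex classes) of the identified surface, and converting that into the exact eventual slope is precisely the nontrivial content of the theorem; your constant $K$ is only an upper bound. Second, and more seriously, the lemma asserts that the complexity \emph{function} — not merely its asymptotic slope, and this full strength is what the paper uses in Corollaries \ref{coroth1} and \ref{coroth2} — is the same for every minimal direction and every initial point. Your argument cannot give this, because minimality does not exclude saddle connections: for a general centrally symmetric hexagon with opposite sides identified (a torus with two marked vertex classes whose difference vector need not be rational with respect to the gluing lattice), there exist minimal directions containing a connection between the two marked classes, and such a connection produces a non-neutral bispecial factor, which threatens to change the complexity by an additive constant from that length on. Either these directions must be excluded, or their contribution must be shown to be harmless; neither is addressed by the soft observation that "$K$ depends only on the polygon", and this uniform control over all minimal directions is exactly what Hubert's proof supplies.
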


\section{Statement of the theorem}\label{enon}

\begin{theorem}
Fix an orthonormal basis of $\mathbb{R}^3$ such that the edges
of the cube are parallel to the coordinate axis.
Let $\omega=(\omega_1, \omega_2,
\omega_3)$ be a unit vector of $\mathbb{R}^3$ such
that $\omega_i\neq 0$ for all $i$. Denote
$\alpha=\frac{\omega_2}{\omega_1},\beta=\frac{\omega_3}{\omega_1}$.
Then assume one of the following holds
\begin{enumerate}
\item If $\alpha,\beta$ are rational numbers, then there
exists $C>0,n_0\in\mathbb{N}$ such that $p(n,\omega)=C$ for all integer $n\geq
n_0$.

\item If $\alpha$ is an irrational number, and $\beta$ is a
rational number, then there exists $C$ such that $p(n,\omega)\leq
Cn$.
%Moreover the map $T_\omega$ is conjugated to
%an interval exchange.

\item If $\alpha,\beta$ are irrational numbers such that
$1,\alpha,\beta$ are linearly dependent over $\mathbb{Q}$, then
there exists $C$ such that $p(n,\omega)\leq Cn$ for all $n$.

\item If $\alpha,\beta, 1$ are linearly independent over
$\mathbb{Q}$, and if  $\alpha^{-1},\beta^{-1},1$ are linearly
dependent over $\mathbb{Q}$, then there exists $C\in]0;1[$ such
that $p(n,\omega)\sim Cn^2$.

\item  $p(n,\omega)=n^2+n+1$, in all other cases.
\end{enumerate}
\end{theorem}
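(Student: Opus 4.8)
The plan is to prove the theorem as five separate propositions, one per case, organized around the single dichotomy \emph{minimal} versus \emph{non-minimal} direction. By Lemma \ref{unfold} it is equivalent to study the coding of the linear flow in direction $\omega$ on $\mathbb{R}^3/\mathbb{Z}^3$, the three letters recording which family of coordinate hyperplanes is crossed. This flow is minimal precisely when $\omega_1,\omega_2,\omega_3$ are $\mathbb{Q}$-independent, that is, when $1,\alpha,\beta$ are $\mathbb{Q}$-independent; this separates cases 4 and 5 (minimal) from cases 1, 2 and 3 (non-minimal). I would first dispatch the non-minimal cases, where the orbit closure is a proper subtorus and the complexity collapses to a lower-dimensional phenomenon.

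In case 1 both $\alpha$ and $\beta$ are rational, so $\omega$ is proportional to a rational vector; the direction is rational, every billiard trajectory is periodic, and the coding word $v_{m,\omega}$ is eventually periodic, whence $p(n,\omega)=C$ for $n\geq n_0$ (the three-dimensional analogue of the periodicity statement in the square-billiard lemma, Lemma \ref{carre}). In cases 2 and 3 there is exactly one independent $\mathbb{Q}$-linear relation among $1,\alpha,\beta$ (the relation $q\omega_3=p\omega_1$ with $\alpha$ irrational for case 2, and a genuine relation $a+b\alpha+c\beta=0$ with $b,c\neq 0$ for case 3), so the closure of the orbit of $T_\omega$ is a two-dimensional subtorus on which the induced flow is minimal. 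Choosing a transversal, this flow is conjugate to a minimal linear flow on a polygon with parallel opposite sides, and Lemma \ref{Hub} gives the sub-linear bound $p(n,\omega)\leq Cn$, uniformly in $m$. The labor here is bookkeeping: identifying the subtorus and its transversal polygon and checking that the three-letter coding descends to the side-coding required by Lemma \ref{Hub}.

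The minimal cases 4 and 5 carry the real content. Here $\omega$ is minimal, so Lemma \ref{Ta} removes the dependence on $m$, Proposition \ref{d} gives $s(n+1,\omega)-s(n,\omega)=N(n,\omega)$ with $N(n,\omega)$ the number of generalized diagonals of direction $\omega$ and length $n$, and Lemma \ref{diag} bounds $N(n,\omega)\leq 2$. Summing this relation twice expresses $p(n,\omega)$ as a double sum of the $N(k,\omega)$ plus the base terms $p(1)$ and $s(1)$, so the asymptotics of $p$ are governed entirely by how often $N(k,\omega)$ equals $2$ rather than $1$. I would pin down the base values $p(1)=3$ and $s(1)=4$ by a direct inspection of the admissible two-letter factors, so that $N(k,\omega)\equiv 2$ yields exactly $p(n,\omega)=n^2+n+1$. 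It then remains, for each $n$, to decide whether both candidate diagonals constructed in the proof of Lemma \ref{diag} genuinely attain combinatorial length $n$.

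The crux is to convert this decision into the arithmetic condition on the reciprocals. Projecting a diagonal onto the three coordinate faces (the projection statement of Lemma \ref{carre}) and applying Lemma \ref{long}, an endpoint lands on an edge exactly when two of the face-crossing sequences coincide; since the crossings of the hyperplanes normal to $e_i$ occur in arithmetic progression with temporal step $1/\omega_i$, such coincidences are governed by the $\mathbb{Q}$-relations among $1/\omega_1,1/\omega_2,1/\omega_3$, equivalently among $1,\alpha^{-1},\beta^{-1}$. When these are $\mathbb{Q}$-independent (case 5) no extra coincidence occurs, both diagonals keep length $n$ for every $n$, so $N(n,\omega)=2$ and $p(n,\omega)=n^2+n+1$. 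When $1,\alpha^{-1},\beta^{-1}$ are $\mathbb{Q}$-dependent (case 4) the relation forces, for a positive-density set of $n$, the two candidate diagonals to coincide or one of them to fall short, so $N(n,\omega)=1$ there; an equidistribution count then gives $\sum_{k<n}N(k,\omega)\sim 2Cn$ with $C\in\,]0;1[$, hence $p(n,\omega)\sim Cn^2$. The main obstacle is exactly this last equidistribution estimate: making precise which diagonal degenerates and computing the density of the degenerate $n$ from the single rational relation among the reciprocals. This is the step I expect to demand the most care, and it is where the exact formula of case 5 and the strictly smaller leading constant of case 4 are ultimately decided.
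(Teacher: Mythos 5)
Your treatment of cases 1--3 matches the paper's: case 1 is periodicity, and cases 2--3 reduce to a minimal linear flow on a lower-dimensional object (the paper handles case 2 by an explicit rectangle/translation reduction and case 3 via Lemma \ref{Hub}; your unified use of Lemma \ref{Hub} for both is a harmless variant). Case 5 is not reproved in the paper at all (it is deferred to the references); your sketch of it, via $N(n,\omega)=2$ for all $n$, is the approach of the first cited article, so no objection there beyond its brevity. The genuine problem is case 4, which is the heart of the paper.

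Your mechanism for case 4 is that, on a positive-density set of $n$, ``the two candidate diagonals coincide or one of them falls short, so $N(n,\omega)=1$ there.'' This is exactly what cannot happen. The key structural fact, proved in Lemma \ref{ordre} by a reflection symmetry that exchanges the two relevant edge types, is that for a minimal direction $N(n,\omega)$ is always \emph{even}; hence $N(n,\omega)\in\{0,2\}$, and the degenerate values of $n$ contribute $0$, not $1$. Geometrically, when the rational relation $\frac{A}{\omega_1}=\frac{B}{\omega_2}+\frac{C}{\omega_3}$ produces a degeneracy, there is a line of direction $\omega$ meeting edges of all three types in a fixed order, so the would-be diagonal of length $n$ passes through a third edge, and this kills \emph{both} candidate diagonals simultaneously (Corollary \ref{cor}); a configuration with exactly one surviving diagonal is ruled out precisely by the parity argument. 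The discrepancy is not cosmetic: if the degenerate set has density $l$, your accounting (values $1$ and $2$ in Lemma \ref{fin1}) yields $p(n)\sim(1-\tfrac{l}{2})n^2$, whereas the correct count (values $0$ and $2$) yields $p(n)\sim(1-l)n^2$ with $l=\frac{1}{A(1+\alpha+\beta)}$, which is what produces the constant $C=1-\frac{1}{A(\alpha+\beta+1)}$ of Corollary \ref{coroth3}. So your outline would deliver the right order of growth but a wrong constant, and it is missing the two ideas that constitute the paper's actual proof of this case: the parity of $N(n,\omega)$ together with the fixed order of the three edge types (Lemma \ref{ordre}), and the explicit density computation of the degenerate $n$ via the floor-sum criterion of Lemma \ref{long} (Lemma \ref{fin2}).
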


\begin{remark}
The cases $(2)$ and $(3)$ correspond to the same algebraic
condition. We separate them, since in $(2)$ an
orthonormal projection on a face give a periodic word.

Except in the first case it is clear that the obtained billiard words are not ultimately periodic, thus we have $p(n,\omega)\geq n+1$.
\end{remark}
\begin{remark}
In the two last cases, the complexity is independent of the initial point $m$, by Lemma \ref{Ta}. In the first cases, we use the notations explained after Lemma \ref{Ta}.
\end{remark}

We study the dependance of the complexity function when the parameters vary.

\begin{corollary}\label{coroth1}
In case $(2)$, two directions with the same value of
$\beta$ have the same complexity.
\end{corollary}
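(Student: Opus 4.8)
The plan is to read the claim off the structural description of the billiard word supplied by the case-$(2)$ proposition of Section \ref{proof}, and then to isolate precisely where the parameter $\alpha$ enters the count. First I would fix $\beta=p/q$ in lowest terms and recall that, by Lemma \ref{carre}, the orthogonal projection of the cubic trajectory onto the face $\langle e_1,e_3\rangle$ is a square billiard in the direction $(\omega_1,\omega_3)$. Since $\omega_3/\omega_1=\beta$ is rational, this direction is not minimal, so the projected orbit is periodic, with a period $L$ depending only on $p$ and $q$, hence only on $\beta$. This periodic ``$xz$-backbone'' carries all the information coming from the first and third coordinates, and it is literally the same for two directions sharing the value of $\beta$.

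Next I would describe the full word as an interleaving of this backbone with the letters produced by the second coordinate. Parametrising the unfolded line $m+\mathbb{R}\omega$ by its parameter $t$ (Lemma \ref{unfold}) and ordering the crossings of the three families of integer planes, the subsequence of $x$- and $z$-crossings is exactly the periodic word of period $L$, while the $y$-crossings are inserted at positions governed by $\alpha=\omega_2/\omega_1$. Because $\alpha$ is irrational, the projection onto $\langle e_1,e_2\rangle$ is a minimal square billiard, so the pattern of $y$-insertions is Sturmian; in particular its complexity is $n+1$ and, crucially, its special-word combinatorics are identical for every irrational $\alpha$.

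Then I would count factors. A factor of length $n$ of $v_{m,\omega}$ is a window of $n$ consecutive crossings, determined by a factor of the period-$L$ backbone (finitely many, bounded in terms of $L$) together with the Sturmian pattern of $y$'s falling in that window. I would compute $s(n+1)-s(n)$ via the bispecial formula, or equivalently via the generalized-diagonal count of Proposition \ref{d} and Lemma \ref{diag}, and observe that every quantity entering these expressions — the number and lengths of the bispecial words, the number of generalized diagonals of each length — is dictated by $L$ together with the universal Sturmian structure of the $y$-insertions. None of these depends on the particular irrational value of $\alpha$, so for two directions $\omega,\omega'$ with the same $\beta$ one gets $s(n+1,\omega)-s(n,\omega)=s(n+1,\omega')-s(n,\omega')$ for all $n$, and after matching the (finite) initial behaviour this gives $p(n,\omega)=p(n,\omega')$.

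The main obstacle I anticipate is making rigorous the insensitivity to $\alpha$: one must verify that two Sturmian insertion patterns with different irrational angles distribute the $y$-letters among the blocks of the backbone in combinatorially the same way, so that no factor appears for one value of $\alpha$ but is missing for another. The delicate point is the behaviour at the block boundaries of the backbone — whether a $y$-crossing can coincide with an $xz$-crossing — which must be ruled out using the independence hypothesis of case $(2)$, and whether the finitely many local configurations actually realised are the same across all irrational $\alpha$. Once this uniformity of the local configurations is established, the independence of the complexity from $\alpha$, and hence the corollary, follows.
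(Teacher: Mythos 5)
There is a genuine gap, and it sits exactly where you locate your ``main obstacle'': your plan requires showing that two directions with the same $\beta$ but different irrational $\alpha$ produce the \emph{same set} of factors (``no factor appears for one value of $\alpha$ but is missing for another''), and this is false. Already the number of consecutive $y$-letters inserted between two consecutive letters of the $xz$-backbone is $\lfloor\alpha\rfloor$ or $\lfloor\alpha\rfloor+1$, and more finely the insertion pattern is governed by the continued fraction of $\alpha$; so bounded-length factors distinguish different values of $\alpha$. This is the same phenomenon as for Sturmian words themselves: codings of rotations by different irrational angles all have complexity $n+1$, but their languages differ (e.g.\ $00$ is a factor when $\alpha<1/2$ and $11$ is not, and conversely). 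What is independent of $\alpha$ is only the \emph{cardinality} $p(n)$, never the language, so an argument aimed at language equality cannot be repaired into a proof of the corollary. A secondary problem is your appeal to Proposition \ref{d} and Lemma \ref{diag}: both require $\omega$ to be a minimal direction for the cubic billiard, i.e.\ totally irrational coordinates, which is exactly what fails in case $(2)$ where $\beta$ is rational.

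The paper's route avoids this entirely, and is the counting statement you actually need. In the proof of Case \ref{2}, the trajectory is shown to be confined to a surface $S$ (a union of rectangles determined by the periodic $xz$-projection), whose unfolding is a fixed rectangle partitioned into finitely many rectangles, and the motion there is a \emph{minimal translation} coded by that partition. The key point is then a direction-independence result for such codings --- this is precisely the content of Lemma \ref{Hub} (Hubert): for a minimal linear flow coded by a polygon with parallel opposite sides, the complexity depends neither on the initial point nor on the direction, only on the coded polygon. Since the rectangle $S$ and its partition depend only on $\beta$ by construction (the last sentence of the proof of Case \ref{2}), two directions with the same $\beta$ give the same coded polygon, hence the same complexity, even though their languages are different. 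Your decomposition into backbone plus Sturmian insertions is a reasonable description of the word, but the corollary must be extracted from a count that forgets the direction, not from a comparison of the factor sets.
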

This statement is a consequence of the last sentence of the proof of Case \ref{2}.
\begin{corollary}\label{coroth2}
For the third case, two directions in the same plane
have the same complexity. It means if two directions
$\omega,\theta$ satisfy
$a\omega_1+b\omega_2+c\omega_3=a\theta_1+b\theta_2+c\theta_3=0$
with $a,b,c\in\mathbb{Z}$, then $p(n,\omega)=p(n,\theta)$.\\
\end{corollary}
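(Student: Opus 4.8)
The plan is to exhibit a single geometric object --- a polygon with parallel opposite sides determined solely by the plane $P=\{(x,y,z):ax+by+cz=0\}$ --- on which both directions induce linear flows coded by the \emph{same} partition, and then to invoke Lemma \ref{Hub} to conclude that the complexity cannot depend on the direction. I record first that the hypothesis $a\omega_1+b\omega_2+c\omega_3=0$ says precisely that $\omega$ is orthogonal to the integer vector $(a,b,c)$, i.e.\ $\omega\in P$, where $P$ is a \emph{rational} plane through the origin. Hence $P\cap\mathbb{Z}^3$ is a rank-two lattice and the quotient $\mathbb{T}_P:=P/(P\cap\mathbb{Z}^3)$ is a two-dimensional torus. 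By Lemma \ref{unfold} the billiard orbit of $(m,\omega)$ is the orbit of $m$ under the translation $T_\omega$ on $\mathbb{R}^3/\mathbb{Z}^3$ (Definition \ref{translation}); since $\omega\in P$, this orbit stays on the leaf $m+\mathbb{T}_P$ of the foliation of the torus by the translates of $P$, where it is a linear flow of direction $\omega$.

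Next I would identify the coding. The three letters of the billiard word are read off from the integer grid planes $\{x=k\}$, $\{y=k\}$, $\{z=k\}$ that the orbit crosses; intersecting these planes with a leaf of the foliation cuts $\mathbb{T}_P$ into a polygon $\Pi_P$ whose sides are the traces of the grid, with opposite sides identified and therefore parallel. The key observation is that $\Pi_P$ together with its labelling of sides by the three letters is built only from $P$ and the integer lattice: it does not involve $\omega$. Consequently, two directions $\omega,\theta$ lying in the same plane $P$ (the same integers $a,b,c$) produce linear flows on the \emph{same} labelled polygon $\Pi_P$, and the billiard words of direction $\omega$ and of direction $\theta$ are exactly the codings of these two flows with respect to the common partition.

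Finally I would apply Lemma \ref{Hub}: for a minimal linear flow on a polygon with parallel opposite sides, coded by sides, the complexity depends neither on the initial point nor on the direction. Thus the common labelled polygon $\Pi_P$ forces $p(n,\omega)=p(n,\theta)$, which is the assertion. The main obstacle I anticipate is the careful verification of the middle step --- that intersecting the face structure of the cube with $P$ really yields a polygon with \emph{parallel} opposite sides and a labelling that is a function of $P$ alone --- together with checking that the induced flow on $\mathbb{T}_P$ is minimal, so that Lemma \ref{Hub} genuinely applies. In the non-minimal sub-case the flow on $\mathbb{T}_P$ is periodic (the one-dimensional mechanism of Lemma \ref{carre}), and the equality of complexities is then immediate. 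Once the surface and its labelling are seen to depend only on $P$, the direction-independence supplied by Lemma \ref{Hub} closes the argument.
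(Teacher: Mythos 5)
Your proposal is correct and follows essentially the paper's own route: the paper obtains this corollary from the last sentence of the proof of Case \ref{3}, where the orbit is reduced to a linear flow on a finite union of polygons with parallel opposite sides determined only by $(a,b,c)$, and Lemma \ref{Hub} then gives independence of the complexity from both the direction and the initial point. One small remark: your fallback for the ``non-minimal sub-case'' is unnecessary (and, as stated, not quite accurate, since two periodic directions need not have equal eventually-constant complexities), because under the Case $3$ hypothesis that $\alpha,\beta$ are irrational the induced flow on $\mathbb{T}_P$ cannot be periodic --- a periodic direction would be proportional to a vector of the lattice $P\cap\mathbb{Z}^3$, forcing $\alpha,\beta$ rational.
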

This point is a consequence of the last sentence of Case \ref{3}.
\begin{corollary}\label{coroth3}
 In case $(4)$, we can compute the constant $C$. If
$(\omega_i)$ satisfy the equation
$\frac{A}{\omega_1}=\frac{B}{\omega_2}+\frac{C}{\omega_3}$, with
$A,B,C\in\mathbb{N}$ then we obtain
$$C=1-\frac{1}{A(\alpha+\beta+1)}.$$
The other cases are obtained by permutation.
\end{corollary}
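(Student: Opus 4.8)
The plan is to turn the computation of the constant into an asymptotic count of generalized diagonals, and then to read off the deficit forced by the algebraic relation that is hidden in the hypothesis of case $(4)$. Since $\alpha,\beta,1$ are linearly independent over $\mathbb{Q}$, the coordinates $\omega_1,\omega_2,\omega_3$ are rationally independent, so $\omega$ is minimal and Proposition \ref{d} gives $s(n+1,\omega)-s(n,\omega)=N(n,\omega)$. Summing this and using $p(n,\omega)\sim Cn^2$ (so that $s(n,\omega)\sim 2Cn$) yields
$$\sum_{k=1}^{N-1}N(k,\omega)=s(N,\omega)-s(1,\omega)\sim 2CN,$$
hence $C=\tfrac12\lim_{N}\tfrac1N\,\#\{\text{generalized diagonals of direction }\omega\text{ of length}\le N\}$. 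So everything reduces to an asymptotic count of diagonals, which one checks is normalised correctly by the generic value $p(n)=n^2+n+1$, where $N(n,\omega)=2$ for all $n$.

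I would then classify the diagonals by the types (Definition \ref{numeroedge}) of the two edges they join. By the argument in Lemma \ref{diag}, total irrationality forbids a diagonal joining two parallel edges, so each diagonal runs between edges of two distinct types $i\neq i'$, and the remaining axis $m$ is the unique axis whose integer coordinate is common to both endpoints. Writing the diagonal in the unfolding as $P+t\omega$, $t\in[0,\Delta t]$, with $P$ and $Q=P+\Delta t\,\omega$ on lattice edges, the shared coordinate forces $\Delta t\,\omega_m\in\mathbb{Z}_{>0}$; thus each of the six ordered families $(i,i')$ is indexed by an integer $q\ge 1$ with duration $\Delta t=q/\omega_m$, and a member is a genuine diagonal exactly when its interior meets no edge. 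Since the number of crossed faces of type $\ell$ in time $\Delta t$ is $\omega_\ell\,\Delta t+O(1)$, equidistribution gives the combinatorial length $n=(\omega_1+\omega_2+\omega_3)\Delta t+O(1)$, so counting by length $\le N$ is, to leading order, counting by duration $\le T$ with $N=(\omega_1+\omega_2+\omega_3)T$.

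The heart of the matter is to locate the interior edges. The hypothesis that $1,\alpha^{-1},\beta^{-1}$ are dependent says precisely that $1/\omega_1,1/\omega_2,1/\omega_3$ span a rank-two group, whose lattice of integer relations is generated by $\tfrac{A}{\omega_1}-\tfrac{B}{\omega_2}-\tfrac{C}{\omega_3}=0$ with $A,B,C\in\mathbb{N}$; that all the ratios $\omega_i/\omega_j$ are irrational guarantees this relation space is exactly one-dimensional. An interior edge of an $(i,i')$ diagonal is a coincidence of two plane crossings, which translates into an instance of this relation. Checking the six families in turn (tracking lengths with Lemma \ref{long}), one finds that a genuinely interior crossing with the correct signs occurs only for the two families sharing axis $1$, namely $(2,3)$ and $(3,2)$, and only when $q\equiv 0\pmod A$; such a connection then splits into a genuine $(2,1)$ piece of parameter $jC$ and a genuine $(1,3)$ piece of parameter $jB$ (and symmetrically for $(3,2)$), so it is not itself a diagonal. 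Consequently the four families sharing axes $2$ and $3$ contribute all of their members, while each of the two families sharing axis $1$ loses exactly the fraction $1/A$.

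Assembling the count, the number of diagonals of duration $\le T$ is
$$2\omega_1 T\Big(1-\tfrac1A\Big)+2\omega_2 T+2\omega_3 T=2T\Big(\omega_1+\omega_2+\omega_3-\tfrac{\omega_1}{A}\Big),$$
which in terms of length equals $2N\big(1-\tfrac{\omega_1}{A(\omega_1+\omega_2+\omega_3)}\big)$. Comparing with $2CN$ and using $\omega_1/(\omega_1+\omega_2+\omega_3)=1/(1+\alpha+\beta)$ gives $C=1-\tfrac{1}{A(\alpha+\beta+1)}$, and permuting the distinguished axis yields the remaining cases. The main obstacle is the interior-edge classification of the previous paragraph: one must show that the single generating relation produces interior edges in exactly those two families and residues, with no spurious coincidences and no double counting when a composite connection splits into its two genuine pieces, and that the rounding in $n=(\omega_1+\omega_2+\omega_3)\Delta t+O(1)$ affects only lower-order terms. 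By contrast, the equidistribution step and the reduction in the first paragraph are routine once minimality is available.
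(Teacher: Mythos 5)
Your proposal is correct, and it reaches the right constant, but it is worth spelling out how it relates to the paper's own route, which is the chain Lemma \ref{ordre} $\rightarrow$ Corollary \ref{cor} $\rightarrow$ Lemmas \ref{fin1} and \ref{fin2}. The paper never counts diagonals globally: it establishes the dichotomy that $s(n+1,\omega)-s(n,\omega)\in\{0,2\}$ (Corollary \ref{cor}, which needs the parity argument of Lemma \ref{ordre} to exclude $N(n,\omega)=1$), and then computes the frequency of the value $0$ by following the orbit of the origin across the planes $X=iA$ and counting, via Lemma \ref{long}, the set $card\{i\mid iA+\lfloor iA\alpha\rfloor+\lfloor iA\beta\rfloor\leq n\}$. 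You instead count all generalized diagonals of length at most $N$, organized into six ordered families indexed by the integer displacement $q$ along the shared axis, and delete the members of the two axis-$1$ families with $A\mid q$; after the duration-to-length conversion this is literally the same count as Lemma \ref{fin2} (your $q=iA\leq\omega_1 T$ becomes $iA(1+\alpha+\beta)\lesssim N$), so the two bookkeepings are equivalent, though yours bypasses the parity argument since you never need the pointwise dichotomy. Two caveats. First, your step ``$p(n,\omega)\sim Cn^2$ so that $s(n,\omega)\sim 2Cn$'' is stated in the wrong direction: one cannot differentiate asymptotics. This is harmless because your argument actually runs the other way (the family count gives $\lim_N\frac{1}{N}\sum_{k<N}N(k,\omega)$, hence $s(N,\omega)\sim DN$ by Proposition \ref{d}, hence $p(N,\omega)\sim\frac{D}{2}N^2$ by Ces\`aro summation, which is exactly Lemma \ref{fin1}), but it should be phrased that way. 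Second, the interior-edge classification that you correctly flag as ``the heart of the matter'' --- that the single relation produces interior crossings only in the families sharing axis $1$ and only at $q\equiv 0\pmod A$, with the composite connection splitting into pieces already counted in other families --- is precisely the content of parts (2)--(4) of Lemma \ref{ordre} and of Corollary \ref{cor}, proved in the paper from Lemma \ref{syst} together with the sign analysis of the coefficients $A,B,C$; your sketch (rank-one relation space, hence a unique pattern of coincidences) is the right mechanism, so the deferred verification is genuine work but not a gap in the strategy.
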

The last point is a consequence of  Lemma \ref{fin2} and Lemma \ref{fin1}, since $p=f_0$.

\section{Proof of the Theorem}\label{proof}
By Lemma \ref{unfold}, we will study the orbit of $(m,\omega)$ under the map $T_\omega$, see Definition \ref{translation}.
Each case of the theorem  will be treated separately; the first case reduces to a periodic one-dimensional coding, the second case to a billiard word inside a square, the third case reduces to a linear flow inside a polygon with parallel opposite sides, allowing to apply a result from Hubert, see Lemma  \ref{Hub}. The fourth case studies the complexity function by studying bispecial factors.

\subsection{First case} 
We prove the following result
\begin{proposition}
Assume the direction $\omega$ is such that
the numbers $\alpha,\beta$ are rational numbers. 
Then there exists $C>0,n_0$ such that $p(n,\omega)=C$ for all integer $n\geq
n_0$.
\end{proposition}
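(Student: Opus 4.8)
The claim is that when $\alpha = \omega_2/\omega_1$ and $\beta = \omega_3/\omega_1$ are both rational, the complexity of the billiard word is eventually constant. Let me think about what this means geometrically.

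If $\alpha$ and $\beta$ are rational, the direction $\omega$ is a rational direction. This means the billiard trajectory should be periodic. Let me verify: by Lemma \ref{unfold}, studying the billiard orbit is equivalent to studying the orbit under the translation $T_\omega$ on the torus $\mathbb{R}^3/\mathbb{Z}^3$.

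With $\alpha = p/q$ and $\beta = r/s$ rational, the direction $\omega = (\omega_1, \omega_1 \alpha, \omega_1 \beta)$ is proportional to $(1, \alpha, \beta) = (1, p/q, r/s)$, which is proportional to an integer vector, say $(qs, ps, rq)$ after clearing denominators.

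**Key observation:** If the direction is proportional to an integer vector $(a, b, c) \in \mathbb{Z}^3$, then the translation $T_\omega$ applied repeatedly will be periodic on the torus. Specifically, after enough steps, the point under $T_\omega$ flow returns to where it started (modulo $\mathbb{Z}^3$).

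Let me think more carefully. The billiard trajectory in the cube corresponds to the line $m + \mathbb{R}\omega$ in the unfolding, intersected with the lattice $\mathbb{Z}^3$. Each crossing of an integer plane corresponds to hitting a face.

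When $\omega \parallel (a,b,c)$ with $\gcd(a,b,c) = d$ (and integers), the line $m + t(a,b,c)$ crosses integer planes periodically. The pattern of face-crossings repeats with a finite period.

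**The proof approach I would take:**

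The plan is to reduce this to a one-dimensional periodic coding, as the outline suggests. Here is the structure:

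First, I would clear denominators to write $(1, \alpha, \beta)$ as a scalar multiple of an integer vector $(a,b,c)$ with $\gcd(a,b,c)=1$. This uses the rationality of $\alpha, \beta$ directly.

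Second, I would show that the billiard orbit is periodic. By Lemma \ref{unfold}, we study the line $\{m + t\omega : t \in \mathbb{R}\}$ in $\mathbb{R}^3$ and record which integer hyperplanes $\{x_i \in \mathbb{Z}\}$ it crosses and in what order. Since $\omega \parallel (a,b,c)$ with integer entries, the parametrization $m + t(a,b,c)$ has the property that the sequence of plane-crossings is periodic in $t$: advancing $t$ by $1$ translates the configuration by the integer vector $(a,b,c)$, which is trivial on the torus $\mathbb{R}^3/\mathbb{Z}^3$. Hence the infinite word $v_{m,\omega}$ is periodic (eventually periodic, accounting for the possibility that $m$ lies on special positions).

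Third, I would invoke the standard fact that a periodic (or eventually periodic) infinite word has bounded complexity: if the word has period $P$, then $p(n) \leq P$ for all $n$, and in fact $p(n)$ is eventually constant. Concretely, for an eventually periodic word with period $P$, one has $p(n+1) = p(n)$ for all $n \geq n_0$, so $p(n) = C$ for some constant $C \leq P$ and all $n \geq n_0$. This is a classical consequence of the Morse-Hedlund theorem (a word is eventually periodic iff its complexity is eventually constant iff $p(n) \leq n$ for some $n$).

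**The main obstacle:** The subtle point is handling the supremum over initial points $m$ in the definition $p(n, \omega) = \max_m p(n, m, \omega)$ (recall the notation after Lemma \ref{Ta}). Different starting points $m$ may give different periods, and I must ensure that the complexity is uniformly bounded and eventually constant across all admissible $m \in X_\infty$. The key is that the period of the combinatorial coding depends only on the direction $(a,b,c)$ — it is governed by how the line crosses the integer lattice, which is controlled by $a+b+c$ (the combinatorial length before repetition, cf. Lemma \ref{long}) — and not on the fine position of $m$. So the period is bounded by a constant depending only on $(a,b,c)$, giving a uniform bound $C$ independent of $m$, and taking the maximum over $m$ preserves eventual constancy.

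I expect the cleanest writeup to state: since $\omega$ is a rational direction, there exists an integer $N$ (essentially $N = a+b+c$) such that every billiard word $v_{m,\omega}$ is periodic with period dividing $N$; therefore each $p(n,m,\omega)$ is eventually constant, bounded by $N$, and so is their maximum $p(n,\omega)$.
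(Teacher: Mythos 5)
Your proof is correct and follows essentially the same route as the paper: unfold the billiard orbit to the line $m+\mathbb{R}\omega$ (Lemma \ref{unfold}), use the rationality of $\alpha,\beta$ to deduce that the coding sequence is periodic, and conclude that the complexity is eventually constant. Your writeup is in fact slightly more explicit than the paper's, since you spell out the Morse--Hedlund step (periodic word implies eventually constant complexity) and the uniformity of the period over the initial point $m$, both of which the paper leaves implicit.
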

\begin{proof}
We study the orbit of the point $m_0=(x,y,z)$
under $T_\omega$. By unfolding we must compute the intersections
of the line $m_0+\mathbb{R}\omega$ with the three sort of faces.
The computation is similar in any case, thus we treat only the case of
the intersection with the face $Y=k$ (same thing for the faces
$X=l$ or $Z=m$ with $m,l,k\in\mathbb{Z}$). Suppose that there exists $\lambda$
such
that $\begin{pmatrix}x+\lambda,& y+\lambda \alpha,&
z+\lambda\beta\end{pmatrix}$ belongs to the
face $Y=k$. We obtain $\lambda=\frac{k-y}{\alpha}$, we deduce that the intersection
point is $(x+\frac{k-y}{\alpha},
k,z+\frac{k-y}{\alpha}\beta)$. The point of the cube
which corresponds in the unfolding to this point is
$(x+\frac{k-y}{\alpha}\quad
\mod\;1,0,z+\frac{k-y}{\alpha}\beta\;\mod\;1)$.

To obtain the sequence coding the orbit of $(x,y,z)$ by $T_\omega$, it
remains to make $k$ vary in $\mathbb{Z}$. Since $\alpha,\beta$ are two
rational numbers, we deduce that the sequence is periodic. Thus
the trajectory is periodic, and the complexity is an eventually
constant function.
\end{proof}
\subsection{Case number $2$}\label{2}
\begin{proposition}
Assume $\alpha$ is an irrational number, and $\beta$ is a rational number. Then there exists $C$ such that $p(n,\omega)\leq
Cn$, for all integer $n$.
\end{proposition}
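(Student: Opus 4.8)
The plan is to exploit the hypothesis $\beta=\omega_3/\omega_1\in\mathbb{Q}$ in order to reduce the three-letter billiard word to a Sturmian word coming from a square billiard, into which the ``$z$''-letters are inserted along a periodic pattern. By Lemma \ref{unfold} I work with the linear flow $\phi_t(m)=m+t\omega$ on $\mathbb{T}^3$; the word $v_{m,\omega}$ is the ordered record of the crossings of the line $m+\mathbb{R}\omega$ with the three families of faces $\{x\in\mathbb{Z}\}$, $\{y\in\mathbb{Z}\}$, $\{z\in\mathbb{Z}\}$, coded by the letters $1,2,3$ respectively.

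First I would treat the three orthogonal projections separately, using that the projection of a cubic trajectory onto a face is a square billiard trajectory (Lemma \ref{carre}). The projection onto the $xy$-face has slope $\alpha\notin\mathbb{Q}$, hence is minimal and gives a Sturmian word $u$ with $p_u(n)=n+1$. The projection onto the $xz$-face has slope $\beta\in\mathbb{Q}$, hence is not minimal and, by Lemma \ref{carre}, is periodic. Writing $\beta=p/q$ in lowest terms, the $x$-crossings and $z$-crossings occur at $\lambda$-values forming commensurable arithmetic progressions, of steps $1/\omega_1$ and $q/(p\omega_1)$, so the subsequence of $v$ over the alphabet $\{1,3\}$ is periodic of period $P=p+q$ (up to a bounded transient).

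Next I would reconstruct $v$ from these two data. Deleting the letters $3$ from $v$ yields exactly the Sturmian word $u$, so it remains to locate the inserted $3$'s. Their positions relative to the $x$-crossings are fixed by the periodic $\{1,3\}$-scaffold, hence determined by a phase taking at most $P$ values; their finer position among the intervening $y$-crossings is governed by the circle rotation of angle $1/(q\alpha)$, coded by the $P$ gaps of one period of the scaffold. Since a circle rotation coded by a partition into finitely many intervals has complexity bounded by a linear function of $n$, the word recording the insertions has complexity $\le C_1 n$. A length-$n$ factor of $v$ is then determined by a factor of $u$ of length $\le n$ together with this insertion data, which yields $p(n,\omega)\le Cn$ for a constant $C$ depending only on $P$, i.e.\ only on $\beta$; this last observation is precisely what gives Corollary \ref{coroth1}.

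The main obstacle is the third step: the relative order of the $z$-crossings and the $y$-crossings is not periodic, being controlled by $\omega_3/\omega_2=\beta/\alpha\notin\mathbb{Q}$, so a crude ``boundedly many choices per inserted letter'' estimate is exponential and useless. One must instead recognize the whole insertion pattern as a single low-complexity object, namely a rotation coded by finitely many intervals, and bound its factor count globally. As a safety net, the integer relation $p\omega_1-q\omega_3=0$ confines the orbit to the invariant $2$-torus $\{px-qz\equiv c \ (\mathrm{mod}\ 1)\}$, on which the flow is linear and meets only finitely many closed curves; the sub-linear complexity of such a linear flow, in the spirit of Lemma \ref{Hub}, then gives $p(n,\omega)\le Cn$ directly, at the cost of losing the explicit dependence on $\beta$.
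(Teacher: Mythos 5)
Your ``safety net'' is not really a fallback --- it is essentially the paper's own proof, and it is the only part of your proposal that closes the argument. The paper projects onto the $Oxz$ plane, where the trajectory is periodic since $\beta\in\mathbb{Q}$ (Lemma \ref{carre}), forms the surface $S$ swept out over this periodic trajectory in the transverse direction (a finite union of rectangles containing the three-dimensional orbit), and unfolds $S$ into a single rectangle, partitioned into finitely many sub-rectangles, on which the orbit becomes a translation, minimal because $\alpha$ is irrational; the complexity of a minimal translation coded by finitely many pieces is sub-linear. Your invariant $2$-torus $\{px-qz\equiv c \bmod 1\}$ is exactly this surface $S$ after unfolding and identification, and your appeal to a Hubert-type bound (Lemma \ref{Hub}) plays the role of the paper's remark that the coded translation has sub-linear complexity. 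Note also that your worry about ``losing the explicit dependence on $\beta$'' is unfounded: the torus and the finitely many closed curves cut on it by the faces depend only on $p,q$, i.e.\ on $\beta$, and Lemma \ref{Hub} makes the complexity independent of the direction and of the initial point --- this is precisely how the paper deduces Corollary \ref{coroth1} from the observation that $S$ depends only on $\beta$.

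By contrast, your primary (insertion) route has a genuine gap --- the one you flag, plus one more. First, the claim that the insertion pattern is ``governed by the circle rotation of angle $1/(q\alpha)$'' and hence has complexity $\le C_1 n$ is asserted, not proved; as you yourself note, the relative order of $z$-crossings and $y$-crossings inside a block between consecutive $x$-crossings is controlled by the irrational $\beta/\alpha$, so the synchronization provided by the letter $1$ does not pin down the interleaving, and per-block choices multiply exponentially. Second, even granting linear complexity of the insertion data, your final step bounds $p(n,\omega)$ by the number of pairs (factor of $u$ of length $\le n$, insertion factor), which is a product of two linearly growing quantities, i.e.\ a bound of order $n^2$, not $n$. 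The only way to repair this is to realize the whole word $v$ as the coding of a single low-dimensional system --- the first-return map of the flow to a transversal of the invariant torus, a circle rotation coded by finitely many intervals --- and bound its factor count once; but that is exactly the torus/surface argument, so the insertion route, once fixed, collapses into your safety net, i.e.\ into the paper's proof.
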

\begin{proof}
 Consider the projection on the plane
$Oxz$. Since $\beta$ is a rational number, we have a periodic
trajectory in the square (see Lemma \ref{carre}). Denote by $(a_i)$
the periodic sequence of points inside the square, such that
$a_p=a_1$, denote $b_i$ the points of the cube such that
$(a_ib_i)$ is parallel to the axis $Oz$. Consider the union $S$ of
the intervals
$$[a_i,a_{i+1}], [b_i,b_{i+1}], [a_i,b_i] \quad
i\leq p-1.$$ The trajectory of $(m_0,\omega)$ is included in $S$,
as can be seen by projection, see the left part of Figure \ref{f1}. Now unfold
the trajectory. The unfolding of $S$ is a rectangle. This
rectangle is partitioned in several rectangles of the same shape.
The trajectory is a translation in this rectangle, see right part of Figure
\ref{f1}. This translation is coded with three letters and it is minimal by hypothesis on $\alpha$. If the
translation was coded by two letters we would obtain a sturmian
word. The computation of the complexity is reduced to the
computation of the complexity of a translation: it is clearly sub-linear. Moreover, remark that the rectangle $S$ only depends on
$\beta$ by construction.
\end{proof}

%Je pense que l'on a un Èchange de $k$ intervalles
% $$p(n,\omega)=(k-1)n+1.$$
% Et $k$ est fonction de $\beta$. La pÈriode de la trajectoire dans
% le carrÈ doit donner $k$.
% voir le manque de lettres.
%\end{remark}

\begin{figure}
\includegraphics[width= 4cm]{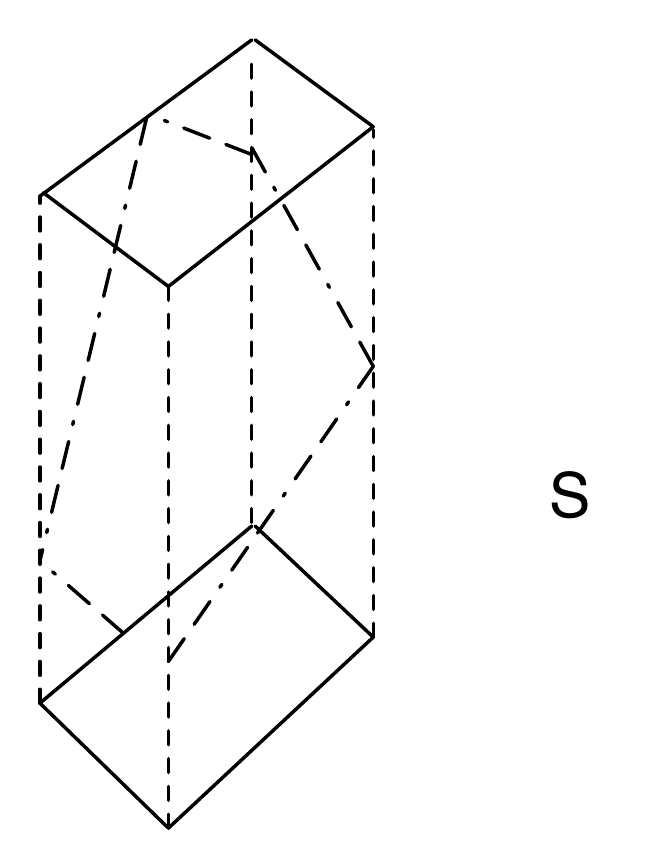}
\includegraphics[width= 4cm]{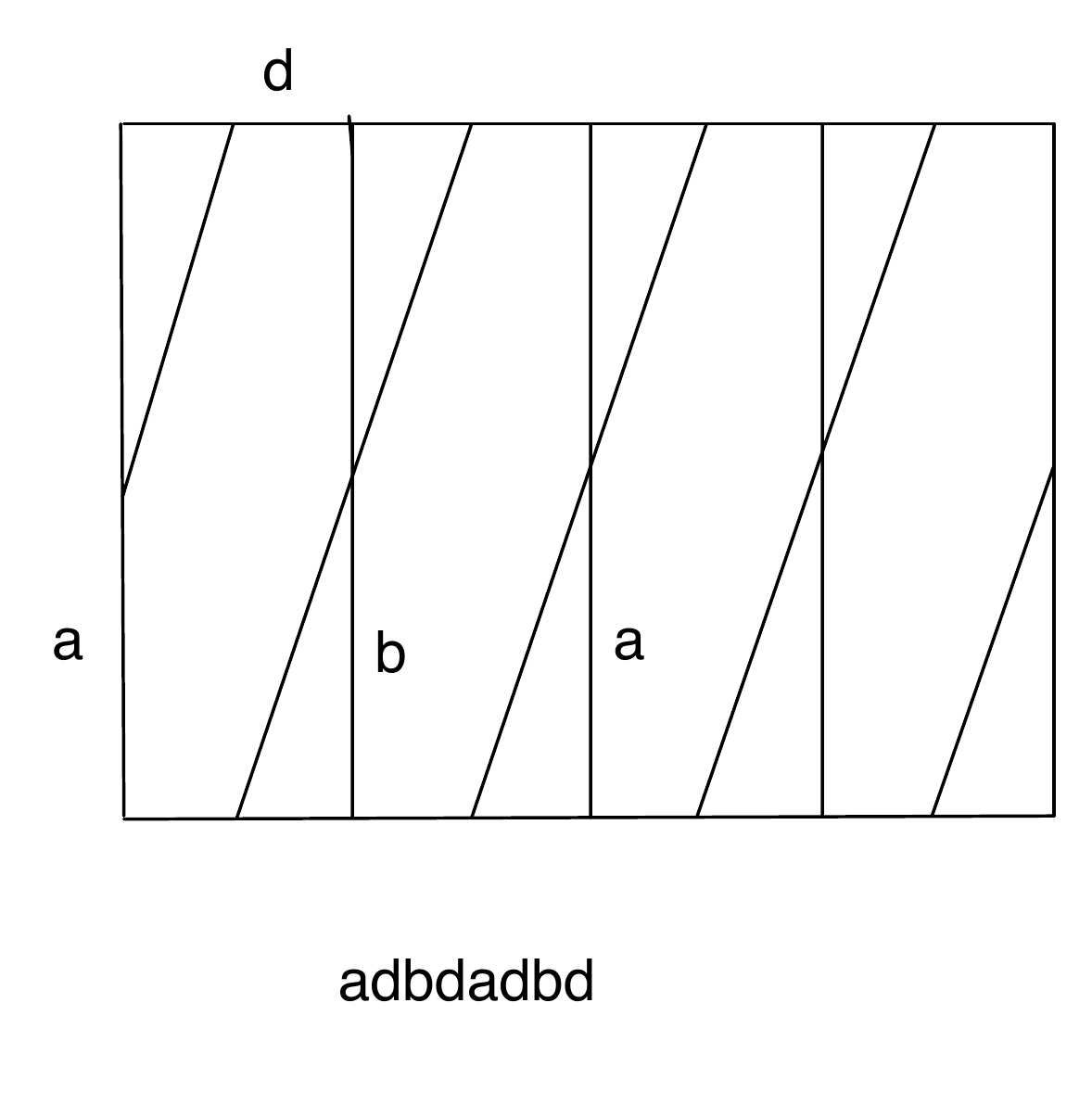}
\caption{Billiard trajectory inside $S$, and unfolding.}\label{f1}
\end{figure}

\subsection{Case number 3}\label{3}
Let $(x,y,z)\in\mathbb{R}^3$, and $a,b,c\in\mathbb{N}$. Consider the plane $P$ of equation $cX+aY+bZ=cx+ay+bz$.
Then consider the canonical projection 
$$\pi: \mathbb{R}^3\mapsto \mathbb{R}^3/\mathbb{Z}^3.$$
The plane intersects the cubes of $\mathbb{Z}^3$ into polygons,
we use this projection to translate the polygons inside the initial cube. Indeed, the initial cube can be identified with  $\mathbb{R}^3/\mathbb{Z}^3.$

\begin{lemma}\label{plans}
The set $\pi(P)$ is the union of a finite number of polygons.
\end{lemma}
\begin{proof}
Consider the intersection $P$ of the plane with the initial
cube. The other polygons are obtained by translating the
intersection  of $P$ with another cube. Thus the study of the
edges of the polygons in the face $Z=0$ can be made by looking at
the edges in the face $Z=k$, when $k$ takes values in $\mathbb{Z}$.
Consider the intersection of $P$ with the face $Z=k$. We obtain a
line of equation
$$\begin{cases}Z=k\\cX+aY=cx+ay+bz-bk\end{cases}$$ The slope of this line is $\frac{-c}{a}$.
It is a rational number.
When $k$ changes this slope is constant, thus all the edges in
this face are parallel. Moreover the intersections of this line
with the edges of the cube are obtained by replacing $Y$ or $X$ by
an integer $l$. For example we obtain
$$X_{k,l}=\frac{cx+a+bz-bk-al}{c}=\frac{cx+ay+bz}{c}-\frac{bk+al}{c}\quad \mod1.$$
The set of all points is obtained by taking the union of $k,l$ in
$\mathbb{Z}$. This gives a finite number of points since these
numbers are rational. Thus in each face there are a finite number
of parallel edges. Moreover inside two parallel faces the edges
are parallel.
\end{proof}

\begin{proposition}
Assume that:
 $\alpha,\beta$ are irrational numbers such that
$1,\alpha,\beta$ are linearly dependent over $\mathbb{Q}$. Then, 
there exists $C>0$ such that $p(n,\omega)\leq Cn$.
\end{proposition}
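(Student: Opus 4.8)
The plan is to reduce the three-dimensional coding to a \emph{linear flow on a two-dimensional polygonal surface} and then invoke Hubert's Lemma~\ref{Hub}. First I would translate the arithmetic hypothesis: since $1,\alpha,\beta$ are linearly dependent over $\mathbb{Q}$, there exist integers $a,b,c$, not all zero, with $c\omega_1+a\omega_2+b\omega_3=0$. Equivalently $\omega$ is orthogonal to the integer vector $(c,a,b)$, so the unfolded orbit line $m_0+\mathbb{R}\omega$ is contained in the plane $P$ of equation $cX+aY+bZ=cx+ay+bz$ introduced before Lemma~\ref{plans}. Hence every point of the orbit of $(m_0,\omega)$ under $T_\omega$ lies on the projected surface $\pi(P)$.

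Next I would invoke Lemma~\ref{plans}: $\pi(P)$ is a finite union of polygons whose edges fall into finitely many families of parallel segments, with the edges on opposite faces of the cube parallel to each other. I would argue that, after the torus identifications, $\pi(P)$ is a polygon (or finite union of polygons) with parallel opposite sides, carrying the restriction of $T_\omega$ as a linear flow in direction $\omega$. Because $\alpha=\omega_2/\omega_1$ is irrational, the flow admits no periodic orbit and is minimal, so Lemma~\ref{Hub} applies and shows that the coding of the flow by the sides of $\pi(P)$ is an infinite word of sub-linear complexity, independent of the starting point.

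Then I would relate the side-coding to the billiard coding. The sides of $\pi(P)$ are exactly the intersections of $P$ with the faces $X\in\mathbb{Z}$, $Y\in\mathbb{Z}$, $Z\in\mathbb{Z}$, so each side carries one of the three letters of $\mathcal{A}$ according to its type. The billiard word $v_{m_0,\omega}$ is obtained from the side-coding of Hubert's flow by the letter-to-letter projection that sends all sides of a given type to the corresponding letter. Such a projection cannot increase complexity, so $p(n,\omega)\le Cn$ with the constant $C$ furnished by Lemma~\ref{Hub}. Finally, since $\pi(P)$ depends only on the plane $P$, hence only on $(a,b,c)$ and not on the particular direction inside it, the same constant works for every direction in that plane, which is precisely the content of Corollary~\ref{coroth2}.

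The main obstacle I expect is the second step: verifying rigorously that $\pi(P)$ really is a polygonal surface with parallel opposite sides in the sense required by Lemma~\ref{Hub}, and that the induced linear flow is minimal. Lemma~\ref{plans} supplies the parallelism of the edges within and across opposite faces, but one must still check that the global gluing identifies the opposite sides genuinely in pairs and that the irrationality of $\alpha,\beta$ excludes a cylinder decomposition of the flow in direction $\omega$; controlling the possible non-minimal sub-cases (and the dependence on the initial point recorded after Lemma~\ref{Ta}) is the delicate point.
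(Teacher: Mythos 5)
Your proposal is correct and follows essentially the same route as the paper: translate the rational dependence into the invariant plane $P$, use Lemma~\ref{plans} to see that $\pi(P)$ is a finite union of polygons with parallel opposite sides, and apply Hubert's Lemma~\ref{Hub} to the induced linear flow, noting that the letter identification can only decrease complexity and that the construction depends only on $(a,b,c)$. The delicate points you flag (minimality of the induced flow and the gluing of opposite sides) are likewise left implicit in the paper's own proof, so there is no divergence in substance.
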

\begin{proof}
 Consider the relation
$a\alpha+b\beta+c=0$ with $a,b,c\in\mathbb{Z}$. We will study the
orbit of $m_0=(x,y,z)$ under $T_\omega$. A point on this line has
coordinates
$(\lambda+x,
\lambda\alpha+y, \lambda\beta+z)$. Thus it is in the
plane $cX+aY+bZ=cx+ay+bz$. This plane intersects each unity cube
of the lattice $\mathbb{Z}^3$ in a polygon. By a translation each
polygon is shifted to the initial cube. This union of polygons
contains the orbit of a point ,see Figure \ref{fig3}.
By Lemma \ref{plans}, there is a finite number of polygons.
Now, the orbit of a point is included inside this finite union of
polygons. The opposite sides of these polygons are parallel. Thus
the billiard flow becomes a linear flow inside a polygon with
parallel opposites sides. We apply the result of Lemma  \ref{Hub}.
Here we remark that several edges can be coded by the same letter,
thus the complexity can be less than the initial one. To end the proof, we remark that the complexity only depends on the polygon. Hence it only depends on $a,b,c$.
\end{proof}

\begin{figure}
\includegraphics[width= 4cm]{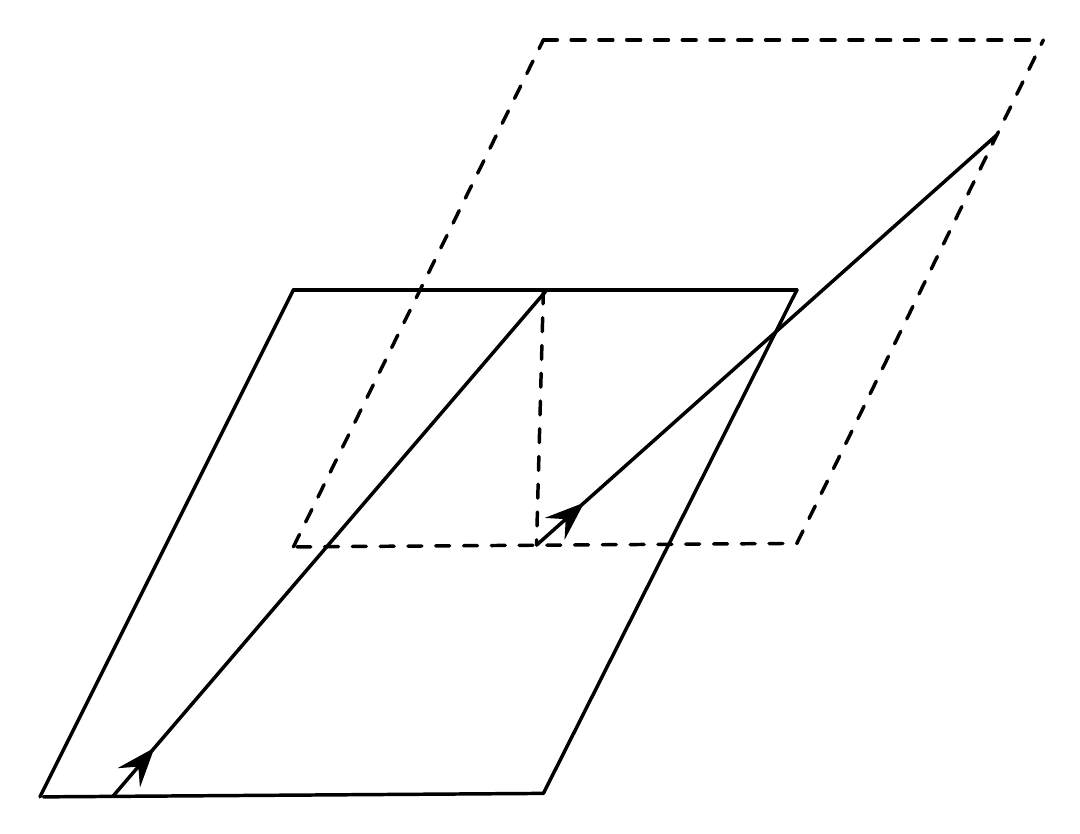}
\caption{Billiard map inside the union of polygons}\label{fig3}
\end{figure}

 \subsection{Case number 4}
 
In this section we will show that the number of generalized
diagonals in the direction $\omega$ can be strictly less than two.
First of all we recall the following lemma.
\begin{lemma}\label{syst}
Consider three numbers $a,b,c$ linearly independent over
$\mathbb{Q}$. Assume that the following equation
$$x/a+y/b+z/c=0,$$ has an integer solution $(x,y,z)$ with $x\neq
0$. Then the rational solutions of the equation are :
$$r(x',\frac{yx'}{x},\frac{zx'}{x})\quad x',r\in\mathbb{Q}.$$
\end{lemma}

\begin{proof}
Consider two rational solutions:
$$\begin{cases}x/a+y/b+z/c=0\\ x'/a+y'/b+z'/c=0\\ \end{cases}
\begin{cases}x/a+y/b+z/c=0\\ (yx'-xy')/b+(zx'-xz')/c=0\\ \end{cases}$$
Since $b/c$ is an irrational number, we deduce
 $$\begin{cases}x/a+y/b+z/c=0\\  yx'=xy'\\zx'=xz'\\ \end{cases}$$

$$\begin{cases}y'=yx'/x\\ z'=zx'/x\\ \end{cases}$$
\end{proof}

\begin{lemma}\label{ordre}
Assume there exists $n$ such that $N(n,\omega)<2$ then :

$$s(n+1,\omega)-s(n,\omega)=0.$$
Moreover there exists a line of direction $\omega$ which
intersects the three types of edges (see Definition \ref{numeroedge}) and these three edges are in a
fixed order, given by the direction.
\end{lemma}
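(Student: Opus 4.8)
The plan is to connect the two assertions of Lemma \ref{ordre} through Proposition \ref{d}. By Proposition \ref{d} we know that $s(n+1,\omega)-s(n,\omega)=N(n,\omega)$, the number of generalized diagonals of direction $\omega$ and length $n$. By Lemma \ref{diag} we always have $N(n,\omega)\leq 2$, and the hypothesis is precisely $N(n,\omega)<2$. The first claim $s(n+1,\omega)-s(n,\omega)=0$ will therefore follow the moment I can show that $N(n,\omega)$ cannot equal $1$, i.e.\ that generalized diagonals in a totally irrational direction come in pairs. So the real content is a parity statement: $N(n,\omega)\in\{0,2\}$.

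To prove $N(n,\omega)\neq 1$, I would revisit the construction in the proof of Lemma \ref{diag}. There, starting from a vertex $O$ and following direction $\omega$ through $n$ unfolded cubes to a point $M$ on a face, one produces two generalized diagonals $CA$ and $DB$ by translating $M$ parallel to the two edge-directions of the face it lands on. These two come together by the symmetries of the cube; the point is that a single diagonal never occurs in isolation. First I would argue that if a line of direction $\omega$ realizes a generalized diagonal ending on an edge of a given type, then the second translation (in the other face-direction) produces another genuine edge, and that the total irrationality hypothesis forbids degeneracies such as $M$ landing on an edge or a vertex, which is exactly where a single unpaired diagonal could arise. Thus either the configuration yields no diagonal of length $n$ ($N=0$) or it yields the full pair ($N=2$), and the intermediate value $N=1$ is excluded. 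Combined with the hypothesis $N(n,\omega)<2$ this forces $N(n,\omega)=0$, hence $s(n+1,\omega)-s(n,\omega)=0$ by Proposition \ref{d}.

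For the \emph{moreover} part, I want to extract geometric information from the failure to have two diagonals. I would use Lemma \ref{long}, which characterizes edges at combinatorial length $n$ from a fixed edge by the condition $\lfloor b_1\rfloor+\lfloor b_2\rfloor+\lfloor b_3\rfloor=n$. A generalized diagonal of length $n$ is a segment of direction $\omega$ joining two edges satisfying this floor-sum condition. The existence of a line of direction $\omega$ meeting all three types of edges (types $1,2,3$ of Definition \ref{numeroedge}) should come from tracking a single unfolded trajectory and recording, in order, which coordinate hyperplanes $\{X\in\mathbb{Z}\}$, $\{Y\in\mathbb{Z}\}$, $\{Z\in\mathbb{Z}\}$ it crosses; since all three $\omega_i$ are nonzero, each family of planes is crossed infinitely often, so along a segment one meets edges of all three types. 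The fixed cyclic order is then dictated by the relative rates $\omega_1,\omega_2,\omega_3$ at which the trajectory accumulates intersections with the three families, i.e.\ by the direction itself.

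The main obstacle I expect is the \emph{moreover} statement rather than the equality, and specifically making precise why the three edges meet in an order that is rigidly \emph{fixed} by $\omega$ and not just eventually periodic. The cleanest route is to phrase it through the one-dimensional projections: by Lemma \ref{carre}$(ii)$ the projection onto each coordinate plane is a square-billiard trajectory, and the ordering of edge-crossings is governed by the continued-fraction / three-distance structure of the relevant ratio. I would need to check carefully that under the hypothesis $N(n,\omega)<2$ the ambiguity in which of the two translates survives is resolved, pinning down a unique line that threads an edge of type $1$, an edge of type $2$, and an edge of type $3$ consecutively. I anticipate that a short case analysis on the face that $M$ lands in, together with the total irrationality ruling out simultaneous integrality of two coordinates, will force the collinearity of the three edges on one line and fix their order; the algebraic identity of Lemma \ref{syst} is the tool I would keep in reserve to certify that the degenerate coincidences cannot occur.
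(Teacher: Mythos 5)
There are two genuine gaps, one in each half of your argument. For the equality $s(n+1,\omega)-s(n,\omega)=0$, your reduction via Proposition \ref{d} and Lemma \ref{diag} to the parity claim $N(n,\omega)\in\{0,2\}$ matches the paper, but your mechanism for excluding $N(n,\omega)=1$ does not work. You locate the possible failure in degeneracies of the endpoint $M$ (landing on an edge or vertex), but that is not where an unpaired diagonal comes from. The two candidate segments produced by the construction of Lemma \ref{diag} always have the same combinatorial length; what can go wrong is that such a segment passes through a \emph{third} edge strictly in its interior, in which case it is not a generalized diagonal of length $n$ but a concatenation of shorter ones. Far from being forbidden by total irrationality, this phenomenon is exactly what the case-4 hypothesis permits --- it is the content of the ``moreover'' part of the lemma and the reason $N(n,\omega)$ can drop below $2$ at all (total irrationality only forbids a diagonal joining two \emph{parallel} edges). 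What is needed, and what the paper uses, is an involution argument: the orthogonal reflection over the plane $X=Z$ exchanges edges of types $1$ and $3$, fixes type $2$, and carries genuine length-$n$ diagonals to genuine length-$n$ diagonals, so they come in pairs and $N(n,\omega)$ is even. Your construction-based pairing cannot substitute for this, because a priori one of the two constructed segments could be broken by an interior edge while the other is not; only the symmetry rules that out.

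For the ``moreover'' statement, the gap is more basic: you conflate crossing the three families of coordinate hyperplanes with meeting edges. A line with all $\omega_i\neq 0$ crosses faces of all three kinds infinitely often, but an edge is the intersection of \emph{two} lattice hyperplanes, and a generic line of totally irrational direction never meets any edge at all. The existence of a single line of direction $\omega$ threading edges of all three types is a highly non-generic fact that holds precisely because of the algebraic relation $\frac{A}{\omega_1}+\frac{B}{\omega_2}+\frac{C}{\omega_3}=0$; the paper proves it by writing the incidence conditions as an explicit linear system, whose solvability is equivalent (via Lemma \ref{syst}) to that relation, with $A$ dividing $a-c$ and the other unknowns determined from it. The fixed order of the three edge types ($3;1;2$ up to reversal of the direction of travel) then comes out of a sign analysis of that solution, using the positivity of the $\omega_i$ and the fact that $A,B,C$ cannot all have the same sign. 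So Lemma \ref{syst} is not a tool to keep ``in reserve'' --- it is the engine of the whole second half --- and your appeal to total irrationality ``ruling out simultaneous integrality of two coordinates'' argues against the very configuration you are required to exhibit.
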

\begin{proof}
First recall that the minimality of $\omega$ implies that the
edges of a diagonal in direction $\omega$ can not be parallel. In
the rest of the proof we can assume that the edges of the
generalized diagonal of direction $\omega$ are of type 1 and 3, see Definition \ref{numeroedge}.
\begin{enumerate}
\item Consider a trajectory in direction $\omega$ between two
edges of types 1 and 3, consider the orthogonal reflection over
the plane $X=Z$. This map exchanges the edges of type $1$ and $3$,
but it leaves invariant edges of type $2$. That implies that
$N(n,\omega)$ is an even number, thus we can not have
$N(n,\omega)=1$. Hence we have $N(n,\omega)=0$. Proposition \ref{d}
finishes the first part of the proof.

\item By applying a translation we can always assume that the
intersection points of the line $m+\mathbb{R}\omega$ with the
edges of the cube have for coordinates
$$(x,0,0);(a,y,b);(c,d,z),$$
with $x,y,z$ reals numbers and $a,b,c,d$ integers. 

We obtain the system

$$\begin{cases}
x+\lambda\omega_1=a\\
\lambda\omega_2=y\\
\lambda\omega_3=b\\
x+\mu\omega_1=c\\
\mu\omega_2=d\\
\mu\omega_3=z\\
\end{cases}$$
where $\lambda,\mu$ are real numbers.

$$\begin{cases}
\lambda\omega_2=y\\
\lambda\omega_3=b\\
\mu\omega_2=d\\
\mu\omega_3=z\\
x=a-b\frac{\omega_1}{\omega_3}\\
\frac{a-c}{\omega_1}=\frac{b}{\omega_3}-\frac{d}{\omega_2}\\
\end{cases}$$
By hypothesis on $\omega$, we have a relation of the form
$$\frac{A}{\omega_1}+\frac{B}{\omega_2}+\frac{C}{\omega_3}=0,$$
where $A,B,C\in\mathbb{Z}$ are relatively prime.

 The last equation of the system is of the same form:
$$\frac{a-c}{\omega_1}+\frac{-b}{\omega_3}+\frac{d}{\omega_2}=0.$$
We apply Lemma \ref{syst} and one deduces that $A$ is a
divisor of $a-$c and:
$$\begin{cases}
d=B\frac{a-c}{A}\\
-b=C\frac{a-c}{A}
\end{cases} $$
Finally the system becomes

$$\begin{cases}
\lambda\omega_2=y\\
\lambda\omega_3=b\\
\mu\omega_2=d\\
\mu\omega_3=z\\
x=a-b\frac{\omega_1}{\omega_3}\\
d=B\frac{a-c}{A}\\
-b=C\frac{a-c}{A}
\end{cases}$$
This system has at least one solution. Hence the existence of
the line is proved.\\

 This system allows us to make several remarks. First, the coordinates
 $\omega_i$ are positive numbers.
This implies that $A,B,C$ can not all be positive numbers. Assume
that we have $A<0, B>0, C>0$ (the other cases are similar). We
deduce that $a-c$ and $d$ are of opposite sign, and that $a-c$ and
$b$ are of same sign.

\item Assume $a-c\geq 0$ this implies $$ d\leq 0, b\geq
0.$$ From the system we deduce $$\lambda\geq 0, \mu\leq 0.$$ This
implies that the edges appear in the order $3;1;2$.

\item On the other hand, if $a-c\leq 0$, by a similar
argument, we have that the order is $2;1;3$.

Moreover the two orders are correlated, it depends on the direction that it used to move along the line. Hence we can reduce to one order.
\end{enumerate}
\end{proof}

\begin{corollary}\label{cor}
Assume $\omega$ is a minimal direction and satisying
$$\frac{A}{\omega_1}=\frac{B}{\omega_2}+\frac{C}{\omega_3}\quad A,B,C\in\mathbb{N}^*.$$
Then for all integer $n$ we have the dichotomy: Either the billiard
orbit of the origin, at step $n$, meets a face labelled by
$v_1$ (see Definition \ref{fin}), and if $A$ divides $n$, then $s(n+1,\omega)-s(n,\omega)=0$,
otherwise $s(n+1)-s(n)=2.$
\end{corollary}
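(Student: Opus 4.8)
The plan is to read off $s(n+1,\omega)-s(n,\omega)$ from the count $N(n,\omega)$ of generalized diagonals and to pin down exactly the lengths $n$ at which this count drops. First I would combine Proposition \ref{d} with Lemma \ref{diag} to get $s(n+1,\omega)-s(n,\omega)=N(n,\omega)\in\{0,1,2\}$, and then invoke the first part of Lemma \ref{ordre}: whenever $N(n,\omega)<2$ one in fact has $N(n,\omega)=0$. Thus $s(n+1,\omega)-s(n,\omega)\in\{0,2\}$, and the whole problem reduces to characterizing the set of $n$ with $N(n,\omega)=0$, equivalently (again by Lemma \ref{ordre}) the set of $n$ for which there exists a line of direction $\omega$ meeting edges of all three types in the order $3;1;2$, the type-$1$ and type-$3$ edges bounding a generalized diagonal of combinatorial length $n$.

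Next I would make this special line explicit, exactly as in the second part of Lemma \ref{ordre}: after a translation its three edge-intersections are $(c,d,z)$, $(x,0,0)$, $(a,y,b)$ of types $3,1,2$ respectively, and Lemma \ref{syst} applied to the relation $\frac{A}{\omega_1}=\frac{B}{\omega_2}+\frac{C}{\omega_3}$ forces $A\mid(a-c)$ together with $d=B\frac{a-c}{A}$ and $b=-C\frac{a-c}{A}$. I would then compute the combinatorial length of the diagonal joining the type-$3$ edge to the type-$1$ edge by Lemma \ref{long}, that is, as the difference of the sums of integer parts $\lfloor\cdot\rfloor+\lfloor\cdot\rfloor+\lfloor\cdot\rfloor$ at the two endpoints. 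Substituting the constraints above, each contribution is an integer multiple of the corresponding coefficient and the total collapses to a multiple of $A$; this yields the divisibility condition $A\mid n$. The $v_1$-face condition comes from the location of the middle edge: it is of type $1$, hence parallel to $Ox$ (Definition \ref{numeroedge}), so the degeneration of the two diagonals onto the symmetry plane $X=Z$ of Lemma \ref{ordre} occurs precisely along a crossing of a face perpendicular to $Ox$, which is the face labelled $v_1$ (Definition \ref{fin}) met by the orbit of the origin at step $n$.

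Finally I would argue the converse: if either $A\nmid n$ or the step-$n$ crossing of the origin is not a $v_1$-face, then the system coming from Lemma \ref{syst} admits no integer solution realizing a single line through all three edge types at length $n$, so the two generalized diagonals produced in the proof of Lemma \ref{diag} remain distinct and $N(n,\omega)=2$. I expect the main obstacle to be the bookkeeping in the middle step: carrying out the floor-function computation of Lemma \ref{long} on the explicit special line and checking, in both directions, that ``a single line through the three edge types at length $n$'' is equivalent to the conjunction ``$A\mid n$ and the $n$-th crossing of the origin is a $v_1$-face,'' including the correct handling of the order $3;1;2$ versus $2;1;3$ so that the correspondence is exact rather than merely up to the reflection symmetry.
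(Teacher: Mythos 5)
Your first step is exactly the paper's: Proposition \ref{d}, Lemma \ref{diag} and the parity argument of Lemma \ref{ordre} give $s(n+1,\omega)-s(n,\omega)=N(n,\omega)\in\{0,2\}$, so everything reduces to deciding when $N(n,\omega)=0$, and, like the paper, you propose to decide this by asking when the candidate diagonal joining a type-$3$ edge to a type-$2$ edge passes through a type-$1$ edge, via the system of Lemma \ref{ordre} and Lemma \ref{syst}. The gap is in your middle step, and it is fatal to precisely the bookkeeping you yourself flag as the main obstacle. What Lemma \ref{syst} yields is the divisibility $A\mid(a-c)$: $A$ divides the difference of the integer $X$-coordinates of the two endpoints, i.e.\ $A$ divides the index $i$ of the face $X=i$ (a $v_1$-face) met by the orbit of the origin. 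It does \emph{not} yield $A\mid n$ for the combinatorial length. If you actually perform the floor computation of Lemma \ref{long} that you propose, with $a-c=mA$, $d=\mp mB$, $b=\pm mC$ as forced by Lemma \ref{syst}, the length of the degenerate diagonal comes out as
$$n=(a-c)-d+b+\lfloor y\rfloor-\lfloor z\rfloor=m(A+B+C)+\lfloor y\rfloor-\lfloor z\rfloor$$
(up to the sign conventions of Lemma \ref{ordre}); the coefficients $B,C$ and the floors of the irrational coordinates survive, so nothing ``collapses to a multiple of $A$''. Consequently the two-sided equivalence you aim at, ``degeneration iff $A\mid n$'', cannot be established by this computation; the criterion that actually comes out of the system is $A\mid i$, where $X=i$ is the crossed face.

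This is also how the statement is used downstream: in Lemma \ref{fin2} the crossings counted are those with the faces $X=iA$, whose combinatorial lengths $iA+[iA\omega_2/\omega_1]+[iA\omega_3/\omega_1]$ are in general not multiples of $A$. The paper's own proof never computes lengths at all; it reads degeneration directly off the system (integer solvability of the collinearity relation, hence divisibility of the $X$-displacement), and it handles the ``otherwise'' branch not by arithmetic but by the order statement: when the origin crosses a face not labelled $v_1$, the two candidate diagonals join edges of types $1$ and $2$, and since no type occurs between $1$ and $2$ in the fixed order $3;1;2$ of Lemma \ref{ordre}, neither diagonal can meet a third edge; together with the evenness of $N(n,\omega)$ this forces $N(n,\omega)=2$. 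Your phrasing ``the two generalized diagonals remain distinct'' also misidentifies the dichotomy: the two diagonals of Lemma \ref{diag} are always distinct; what matters is whether they are genuine or pass through an intermediate edge. So while your skeleton is the paper's, the arithmetic heart of the argument is wrong, and repairing it requires replacing ``$A\mid n$'' by ``$A$ divides the coordinate of the crossed $v_1$-face'' throughout, which is the reading under which the corollary is proved and applied.
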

\begin{proof}
First we claim that there exist an infinite number of integers
$n$ such that $s(n)=0$. Indeed in the last system obtained in the
proof of Lemma \ref{ordre} we can modify the values of $a,c$ such
that $A$ divides $a-c$ . Now we can assume that the order related
to the edges is $3;1;2$ see Lemma \ref{ordre}. Consider the orbit
of the origin, and the intersection with a face (of a cube of
$\mathbb{Z}^3$) parallel to $X=0$. With the method of Lemma
\ref{diag} we deduce that the only possibility for a generalized diagonal is a
trajectory between edges $3$ and $2$. Denote by $n$ the length of
the diagonal, by previous system we deduce that if $A$ divides
$n$ the trajectory between $3$ and $2$ passes through the edge $1$.
We deduce $s(n+1,\omega)=s(n,\omega)$. The first part is proved.

Assume now that we meet another face at step $n$, for example the
face parallel to $Z=0$. Then the two associated diagonals have for
order $1;2$ and $2;1$. We prove by contradiction that we can not
have $N(n,\omega)\leq 1$. Since the order is unique, see Lemma
\ref{ordre}, the only possibility to obtain a third edge is to
start from the edge labelled 1. Then the diagonal which starts form
$3$ does not intersect another edge. This implies $N(n,\omega)=1$,
but this is a contradiction with the first part of Lemma
\ref{ordre}.
\end{proof}
This corollary implies that the sequence
$(s(n,\omega))_{n\in\mathbb{N}}$ can take only two values. Due to
the next lemma, to finish the proof it remains to obtain the
frequency of each value.
\begin{lemma}\label{fin1}
Assume that the sequence
$(s(n+1,\omega)-s(n,\omega))_{n\in\mathbb{N}}$ has value in
$\{0;1;2\}$, and that the numbers $0;1;2$ have respectively for
frequency $l,m,p$. Then the complexity satisfies
$$p(n)\sim\frac{m+2p}{2}n^2.$$
\end{lemma}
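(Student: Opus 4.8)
The plan is to integrate the second-difference sequence twice, reading the hypothesis on frequencies as a statement about Cesàro averages. First I would set $t(k):=s(k+1,\omega)-s(k,\omega)$, so that telescoping gives $s(n,\omega)=s(0,\omega)+\sum_{k=0}^{n-1}t(k)$, and summing once more yields $p(n,\omega)=p(0,\omega)+\sum_{j=0}^{n-1}s(j,\omega)$. The hypothesis says precisely that each value $j\in\{0,1,2\}$ is taken by $t$ with a well-defined frequency $f_j$, where $(f_0,f_1,f_2)=(l,m,p)$; that is, $\frac{1}{N}\,\mathrm{card}\{k<N:\ t(k)=j\}\to f_j$ as $N\to\infty$. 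Since every term of $t$ lies in $\{0,1,2\}$ we also have $l+m+p=1$, so in particular the set of indices contributing nonzero terms has a genuine density.

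Next I would compute the average of $t$. Writing $\sum_{k<N}t(k)=\mathrm{card}\{k<N:\ t(k)=1\}+2\,\mathrm{card}\{k<N:\ t(k)=2\}$ and dividing by $N$, the frequency hypothesis gives $\frac{1}{N}\sum_{k<N}t(k)\to m+2p$. Combined with the telescoping identity for $s$, this shows $s(n,\omega)\sim(m+2p)\,n$.

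Setting $c:=m+2p$, it then remains to sum once more: from $s(j,\omega)\sim c\,j$ I want $\sum_{j<n}s(j,\omega)\sim \frac{c}{2}n^2$, whence $p(n,\omega)\sim\frac{m+2p}{2}n^2$ as claimed.

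The one place requiring care — the main, though mild, obstacle — is justifying this last passage from $s(j)\sim cj$ to $\sum_{j<n}s(j)\sim \frac{c}{2}n^2$, which is the discrete analogue of integrating an asymptotic equivalence. I would argue by squeezing: fix $\varepsilon>0$; for all $j$ larger than some $j_0$ one has $(c-\varepsilon)j\le s(j,\omega)\le(c+\varepsilon)j$, so summing over $j_0\le j<n$ and using $\sum_{j<n}j\sim n^2/2$ traps $\frac{2}{n^2}\sum_{j<n}s(j,\omega)$ between $c-\varepsilon$ and $c+\varepsilon$ in the limit, and letting $\varepsilon\to0$ concludes. No deeper difficulty arises, since the existence of the frequencies is exactly what guarantees that the relevant Cesàro averages converge, so both successive summations are legitimate.
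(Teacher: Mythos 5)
Your proof is correct. Note that the paper actually states Lemma \ref{fin1} with no proof at all (the proof environment following it belongs to Lemma \ref{fin2}), and your argument --- telescoping twice, extracting $s(n,\omega)\sim(m+2p)\,n$ from the Ces\`aro average of the second differences, then integrating the asymptotic by the $\varepsilon$-squeeze --- is precisely the standard justification the paper implicitly invokes, so there is nothing for it to diverge from.
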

\begin{lemma}\label{fin2}
Assume the direction satisfy the hypothesis
$\frac{A}{\omega_1}=\frac{B}{\omega_2}+\frac{C}{\omega_3}$, with
$A,B,C\in\mathbb{N}$. Then the frequency $l$ of $0$ in the sequence
$(s(n+1,\omega)-s(n,\omega))_{n\in\mathbb{N}}$ is:
$$l=\frac{\omega_1}{A(\omega_1+\omega_2+\omega_3)}.$$
\end{lemma}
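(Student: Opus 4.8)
The goal is to compute the frequency $l$ of the value $0$ in the difference sequence $(s(n+1,\omega)-s(n,\omega))_n$. By Corollary \ref{cor}, a value $0$ occurs at step $n$ precisely when the billiard orbit of the origin meets a face labelled $v_1$ (the pair of faces orthogonal to $Ox$) \emph{and} $A$ divides $n$; in all other cases the difference is $2$. So I would first translate ``frequency of $0$'' into a counting statement about the billiard orbit, using Lemma \ref{long}: a crossing at combinatorial length $n$ corresponds to hitting a face $X=k$, $Y=k$, or $Z=k$ of the unfolded lattice, and the combinatorial length is recorded by $\lfloor b_1\rfloor+\lfloor b_2\rfloor+\lfloor b_3\rfloor=n$. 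The plan is to count, among the first $N$ crossings of all three families of faces, how many are crossings of an $X$-face occurring at a length $n$ divisible by $A$, and divide by $N$ as $N\to\infty$.

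\medskip
\noindent\textbf{Computing the relevant densities via equidistribution.}

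The orbit $(T_\omega^n(m))_n$ is a minimal translation on $\mathbb{R}^3/\mathbb{Z}^3$, so the crossings of the three face-families are equidistributed with densities proportional to $\omega_1,\omega_2,\omega_3$ respectively: on a long segment of the line $m+\mathbb{R}\omega$, the number of integer values crossed by the $i$-th coordinate is asymptotically $\omega_i$ times the Euclidean length. Hence the proportion of all crossings that are $X$-crossings is $\frac{\omega_1}{\omega_1+\omega_2+\omega_3}$. This gives the first factor. The second factor is the density, \emph{among $X$-crossings}, of those whose combinatorial length $n$ is divisible by $A$. I would argue that the combinatorial length $n=\lfloor b_1\rfloor+\lfloor b_2\rfloor+\lfloor b_3\rfloor$ increases by $1$ at each crossing of any of the three families, so as we pass through successive $X$-crossings the associated length advances by the number of intervening crossings; by the same equidistribution the length $n$ is (asymptotically) uniformly distributed modulo $A$ over the $X$-crossings, giving density $\frac{1}{A}$. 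Multiplying the two factors yields
$$l=\frac{1}{A}\cdot\frac{\omega_1}{\omega_1+\omega_2+\omega_3}=\frac{\omega_1}{A(\omega_1+\omega_2+\omega_3)},$$
which is the claimed value.

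\medskip
\noindent\textbf{Main obstacle and how I would handle it.}

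The delicate point is justifying the \emph{uniform distribution modulo $A$} of the combinatorial lengths at which $X$-crossings occur — this is where the special arithmetic of the direction (the relation $\frac{A}{\omega_1}=\frac{B}{\omega_2}+\frac{C}{\omega_3}$) and Lemma \ref{syst} must be used, rather than naive independence. I expect one must verify that the $X$-crossing lengths do not systematically avoid a residue class mod $A$: Corollary \ref{cor} already shows the structure is rigid (a $v_1$-face is met at every $X$-crossing, and divisibility by $A$ forces the third edge to be hit), so the arithmetic progression structure of the solutions in Lemma \ref{syst} is exactly what guarantees every residue appears with equal frequency. I would make this precise by a Weyl equidistribution argument for the sequence of fractional parts governing successive crossings, or equivalently by counting lattice points of each type in a large box $[0,L]\times[0,\alpha L]\times[0,\beta L]$ and tracking the induced residue of the partial sum $\lfloor b_1\rfloor+\lfloor b_2\rfloor+\lfloor b_3\rfloor$ modulo $A$. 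The remaining steps — converting crossing densities to the stated frequency and checking the boundary/rounding terms are lower order — are routine.
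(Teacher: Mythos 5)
Your proposal leaves its central step unproven and, more importantly, builds on a mischaracterization of where the zeros occur. You single out the uniform distribution modulo $A$ of the combinatorial lengths of the $X$-crossings as the crux, but you never carry it out --- you describe a Weyl-type plan and declare the rest routine, so as written the proposal is incomplete. The deeper issue is the characterization itself: the divisibility condition produced by the geometry is not on the combinatorial length $n$ but on the $X$-coordinate of the crossing. In the system of Lemma \ref{ordre}, Lemma \ref{syst} forces $A\mid a-c$, a difference of $X$-coordinates of edges; for the orbit of the origin this means that the generalized diagonal associated with the crossing of the plane $X=k$ degenerates (passes through an edge of the third type) exactly when $A\mid k$. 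Hence the zero set of $s(n+1,\omega)-s(n,\omega)$ is $\{n_i : i\in\mathbb{N}\}$, where $n_i=iA+\lfloor iA\alpha\rfloor+\lfloor iA\beta\rfloor$ is the combinatorial length of the crossing with the plane $X=iA$ (Lemma \ref{long}); it is \emph{not} the set $\{n : \text{the }n\text{-th crossing is an }X\text{-crossing and }A\mid n\}$ --- for instance $n_1=A+\lfloor A\alpha\rfloor+\lfloor A\beta\rfloor$ is a zero but is in general not divisible by $A$. You followed the literal wording of Corollary \ref{cor}, which is admittedly stated sloppily, but that wording is inconsistent with the corollary's own proof and with Lemma \ref{ordre}.

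Once the zero set is parametrized as $\{n_i\}$, the paper's count is immediate and needs no equidistribution at all: $n_i=iA(1+\alpha+\beta)+O(1)$, so the number of zeros at length at most $n$ is $\frac{n\omega_1}{A(\omega_1+\omega_2+\omega_3)}+o(n)$, which is exactly the claimed frequency; the factor $1/A$ comes from the arithmetic spacing of the planes $X=iA$, not from uniformity of residues. Your two-factor decomposition (density of $X$-crossings times density of the residue class) happens to output the same number --- and indeed your equidistribution claim is true when $1,\alpha,\beta$ are rationally independent --- but it computes the density of a different set, and this numerical coincidence masks the error: you trade a two-line count for a genuinely harder distributional statement that you do not prove.
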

\begin{proof}
By Corollary \ref{cor}, it is equivalent to consider the
intersection of the orbit of the origin with the planes parallel
to $X=0$. A point in the orbit of the origin has for coordinates:
$$\begin{pmatrix}\lambda\omega_1,& \lambda\omega_2,& \lambda\omega_3\end{pmatrix}.$$
It meets the face $X=iA$ at the point
$$\begin{pmatrix}iA,& \frac{iA}{\omega_1}\omega_2,& \frac{iA}{\omega_1}\omega_3\end{pmatrix}.$$
Then we must compute the number of $i$ such that this point is at
combinatorial length less than $n$. By Lemma \ref{long}, it remains
to compute
$$card\{i|iA+[\frac{iA}{\omega_1}\omega_2]+[\frac{iA}{\omega_1}\omega_3]\leq n\}.$$
$$=\frac{n\omega_1}{A(\omega_1+\omega_2+\omega_3)}+o(n).$$
We deduce the value of the frequency.
$$l=\frac{\omega_1}{A(\omega_1+\omega_2+\omega_3)}.$$
\end{proof}
 \begin{proposition}
Assume the numbers $\alpha,\beta, 1$ are linearly independent over
$\mathbb{Q}$, and $\alpha^{-1},\beta^{-1},1$ are linearly
dependant over $\mathbb{Q}$. Then
 there exists $C\in]0;1[$ such
that $p(n,\omega)\sim Cn^2$.
\end{proposition}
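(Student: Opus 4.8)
The plan is to recognize that every ingredient needed is already in place: the task is to reduce the hypothesis of Case $4$ to the algebraic normal form required by Corollary \ref{cor} and Lemma \ref{fin2}, and then to splice together Corollary \ref{cor}, Lemma \ref{fin1} and Lemma \ref{fin2} to read off both the quadratic growth and the constant.

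First I would translate the two hypotheses. Dividing by $\omega_1$ shows that the independence of $1,\alpha,\beta$ over $\mathbb{Q}$ is equivalent to the independence of $\omega_1,\omega_2,\omega_3$ over $\mathbb{Q}$; hence $\omega$ is a minimal direction, so by Lemma \ref{Ta} the complexity does not depend on $m$ and by Proposition \ref{d} we have $s(n+1,\omega)-s(n,\omega)=N(n,\omega)$. The dependence of $1,\alpha^{-1},\beta^{-1}$ over $\mathbb{Q}$ is in turn equivalent to a rational relation among $\tfrac1{\omega_1},\tfrac1{\omega_2},\tfrac1{\omega_3}$, which after clearing denominators reads $\tfrac{A}{\omega_1}+\tfrac{B}{\omega_2}+\tfrac{C}{\omega_3}=0$ with coprime integers $A,B,C$. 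I would then observe that none of $A,B,C$ can vanish — a zero coefficient would give a rational relation between two of the $\omega_i$, i.e. between $1,\alpha,\beta$, contradicting their independence — and that, since all $\omega_i>0$, the three coefficients cannot all share one sign. So exactly one coefficient is opposite to the other two, and after permuting the coordinate axes (which merely relabels the three letters coding the faces and therefore leaves $p(n,\omega)$ unchanged) the relation becomes the normal form $\tfrac{A}{\omega_1}=\tfrac{B}{\omega_2}+\tfrac{C}{\omega_3}$ with $A,B,C\in\mathbb{N}^{*}$.

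With this normal form, Corollary \ref{cor} applies and shows that the sequence $\bigl(s(n+1,\omega)-s(n,\omega)\bigr)_n$ takes only the values $0$ and $2$: it is $0$ precisely at those lengths $n$ where the generalized diagonal additionally meets an edge of type $1$ (equivalently $A\mid n$), and $2$ otherwise. In the notation of Lemma \ref{fin1} this forces the frequency $m$ of the value $1$ to be $0$; writing $l$ and $p=1-l$ for the frequencies of $0$ and $2$, Lemma \ref{fin1} yields $p(n)\sim\frac{m+2p}{2}n^{2}=(1-l)\,n^{2}$. Lemma \ref{fin2} then provides $l=\dfrac{\omega_1}{A(\omega_1+\omega_2+\omega_3)}$, so the constant is $C=1-l$. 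Because $A\ge 1$ and $0<\omega_1<\omega_1+\omega_2+\omega_3$ (all $\omega_i>0$), we get $0<l<1$, hence $C\in\,]0;1[$; and substituting $\omega_1+\omega_2+\omega_3=\omega_1(\alpha+\beta+1)$ recovers the explicit value $C=1-\frac1{A(\alpha+\beta+1)}$ announced in Corollary \ref{coroth3}.

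The steps I expect to demand the most care are, first, the reduction to the normal form — in particular checking that no coefficient is zero and that the coordinate permutation really preserves the complexity — and, second, the fact that the frequency $l$ exists as a genuine limit rather than merely a $\limsup$. The latter is the true substance of the argument and is exactly what Lemma \ref{fin2} delivers through its lattice-point count $\#\{i: iA+\lfloor iA\omega_2/\omega_1\rfloor+\lfloor iA\omega_3/\omega_1\rfloor\le n\}=\frac{n\omega_1}{A(\omega_1+\omega_2+\omega_3)}+o(n)$, whose $o(n)$ error must be controlled so that dividing by the total count gives a bona fide limiting frequency; granting that estimate, the rest is bookkeeping.
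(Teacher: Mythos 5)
Your proof is correct and follows essentially the same route as the paper, whose own proof is simply the one-line assembly of Corollary \ref{cor}, Lemma \ref{fin1} and Lemma \ref{fin2}. The extra details you supply --- reducing the rational relation among $\omega_1^{-1},\omega_2^{-1},\omega_3^{-1}$ to the normal form $\frac{A}{\omega_1}=\frac{B}{\omega_2}+\frac{C}{\omega_3}$ with $A,B,C\in\mathbb{N}^*$ via a coordinate permutation, checking no coefficient vanishes, and verifying $C=1-l\in\,]0;1[$ --- are exactly the steps the paper leaves implicit, so this is a faithful (and more complete) rendering of the same argument.
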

\begin{proof}
The proof is a consequence of Corollary \ref{cor}, Lemmas \ref{fin1}, \ref{fin2}.
\end{proof}
\subsection{Last case} The proof can be found in \cite{moi1} or in
\cite{Ba} for the $s$-dimensional case. In the first article the main object of the proof consists in stating that $N(n,\omega)=2$ for every integer $n$. In the second article, the main step of the proof consists in stating that $p(n,\omega)$ does not depend on the direction $\omega$.

\medskip

{\bf Acknowledgements: } The author thanks the referees for the fruitful comments and remarks.
\bibliography{bibliorot}
\end{document}